\numberwithin{equation}{section}
\newtheorem{theorem}{Theorem}[section]
\newtheorem{proposition}[theorem]{Proposition}
\newtheorem{corollary}[theorem]{Corollary}
\newtheorem{remark}[theorem]{Remark}
\def\bR  {\mathbb{R}}
\def\del{\partial }
\def \la {\langle}
\def \ra {\rangle}
\def\bR  {\mathbb{R}}
\def\del{\partial }
\def \la {\langle}
\def \ra {\rangle}
 \newcommand{\tO}{\tilde\Omega}
 \newcommand{\unn}{u_{\overline \nu}}
 \newcommand{\bee}{\begin{equation}}
 \newcommand{\eee}{\end{equation}}
\date{May 8, 2019}
\begin{document}

\title{Onsager's Conjecture with Physical Boundaries and an Application to the Vanishing Viscosity Limit}
\author{Claude Bardos\footnotemark[1]  \and  Edriss S.\ Titi\footnotemark[2]  \and Emil Wiedemann\footnotemark[3]}
\maketitle

\begin{abstract}
We consider the incompressible Euler equations in a bounded domain in three space dimensions. Recently, the first two authors proved Onsager's conjecture for bounded domains, i.e., that the energy of a solution to these equations is conserved provided the solution is H\"older continuous with exponent greater than 1/3, uniformly up to the boundary. In this contribution we relax this assumption, requiring only interior H\"older regularity and continuity of the normal component of the energy flux near the boundary. The significance of this improvement is given by the fact that our new condition is consistent with the possible formation of a Prandtl-type boundary layer in the vanishing viscosity limit.

\end{abstract}


\renewcommand{\thefootnote}{\fnsymbol{footnote}}

\footnotetext[1]{%
Laboratoire J.-L. Lions, BP187, 75252 Paris Cedex 05, France. Email:
claude.bardos@gmail.com}

\footnotetext[2]{%
Department of Mathematics,
                 Texas A\&M University, 3368 TAMU,
                 College Station, TX 77843-3368, USA. Department of Applied Mathematics and Theoretical Physics, University of Cambridge, Cambridge CB3 0WA, UK. Department of Computer Science and Applied Mathematics, Weizmann Institute of Science, Rehovot 76100, ISRAEL. Email: titi@math.tamu.edu \quad Edriss.Titi@damtp@cam.ac.uk}

\footnotetext[3]{%
Institut f\"ur Angewandte Analysis, Universit\"at Ulm, Helmholtzstra\ss e 18, 89081 Ulm, Germany. Email:
emil.wiedemann@uni-ulm.de}

\section{Introduction}
As early as in 1949, L.\ Onsager \cite{ON} conjectured that an ideal incompressible flow will conserve energy if it is H\"older continuous with exponent greater than 1/3. His conjecture, which was based on Kolmogorov's 1941 theory of turbulence and  was taken up by mathematicians only in the 1990s, when Eyink \cite{GEY} proved it for H\"older continuous functions with exponent larger than $1/2$, and Constantin-E-Titi \cite{CET} independently gave a complete proof of a ``stronger" version of the conjecture in the context of Besov spaces for exponent larger $1/3$. These results were later sharpened by Cheskidov et al.~\cite{CCFS}.

More recently, new interest has arisen in the relation between regularity and energy conservation as studied by Onsager. One direction of research has established the ``other direction" of Onsager's Conjecture, that is the optimality of the exponent 1/3. In other words, the aim has been to exhibit, for every $\alpha<1/3$, a weak solution of the Euler equations in $C^{0,\alpha}$ which does \emph{not} conserve energy. This has been achieved, as the culmination of a series of works by De Lellis-Sz\'ekelyhidi and others throughout several years, by Isett \cite{Isett} and Buckmaster-De Lellis-Sz\'ekelyhidi-Vicol \cite{BDSV}. However, one should keep in mind that the existence of solutions which belong to $C^{0,\alpha}$, with $\alpha<1/3$, and which dissipate the energy does not imply that all solutions that do not belong to $C^{0,\alpha}$, with $\alpha>1/3$, dissipate the energy. In fact, the authors of \cite{BT-Shear} provide  simple examples of weak solutions of the Euler equations which conserve the energy and which are not more regular than $L^2$, in particular they are not even bounded. Eventually, it is mostly in the presence of boundary effects that one can establish some type of complete equivalence between loss of regularity and non-conservation of energy, cf.\ Theorem 4.1 in \cite{BT}, following a theorem of Kato \cite{Ka}.

Another recent line of research has focused on the extension of the classical results \cite{GEY, CET, CCFS} to other systems of fluid dynamics, such as the inhomogeneous incompressible Euler and Navier-Stokes equations \cite{FGSW,LSh}, the isentropic compressible Euler equations \cite{FGSW}, the full Euler system \cite{DrEy}, the compressible Navier-Stokes equations \cite{Cheng Yu}, and a general class of hyperbolic conservation laws \cite{GMS}.

All these results are proved only in the absence of physical boundaries, i.e.\ on the whole space or the torus. Except for the case of the half-space \cite{RRS}, Onsager's Conjecture had not been studied in domains with boundaries until the recent work \cite{BT2} of the first two authors, who proved energy conservation of weak solutions of the incompressible Euler equations in (smooth) bounded domains $\Omega\subset\bR^n$ under the assumption that the solution be in $C^{0,\alpha}(\overline{\Omega})$ for some $\alpha>1/3$.

The aim of the present note is to give a less restrictive assumption on the regularity of the velocity. More precisely, we show that the energy is conserved if the weak solution $(u,p)$ of the Euler equations possesses  the following  properties (cf.~Theorem~\ref{CETloc}, below):
\begin{itemize}
\item At least for some $\beta<\infty$ and some $V_\gamma\subset\Omega$, where $\gamma>0$ and  $\{x\in\Omega: d(x,\partial\Omega)<\gamma\}\subset V_\gamma\,,$ one has $p\in L^{3/2}((0,T); H^{-\beta}(V_\gamma))\,.$
\item $u\in L^3((0,T);C^{0,\alpha}(\tilde{\Omega}))$ for any $\tilde{\Omega}\subset\subset\Omega$, with an exponent $\alpha>1/3$ that may depend on $\tilde{\Omega}$;
\item the energy flux
$$ \left(\frac{|u|^2}{2}+p\right)u$$
has a continuous normal component near the boundary of $\Omega$.
\end{itemize}

This may seem at first glance like a merely technical improvement, but, unlike the hypothesis of \cite{BT2}, our assumptions are consistent with the formation of a boundary layer in the vanishing viscosity limit. Indeed, consider a sequence of Leray-Hopf weak solutions of the Navier-Stokes equations, with no-slip boundary conditions, and  viscosity tending to zero. Then the discrepancy with the no-normal flow boundary condition for the Euler equations may lead to the formation of a boundary layer, where the normal directional derivative of the \emph{tangential} velocity component, and hence the $C^{0,\alpha}-$norm of the velocity, will blow up as the viscosity goes to zero. Note that this is not in contradiction with our regularity assumptions. The precise statement on the viscosity limit is contained in Theorem~\ref{noano0}.

As in \cite{BT2}, our argument relies on commutator estimates as introduced in \cite{CET}, but we pay special attention to clearly separate the \emph{local} and the \emph{global} arguments. In section~\ref{duchonrobert0}, we follow the work of Duchon-Robert \cite{DuchonRobert} to establish the local conservation of energy (Theorem~\ref{localduchonrobert}). However, since we no longer work in the whole space, or in the case of periodic boundary conditions,  we have to study more carefully the regularity of the pressure (Proposition \ref{lempress}). Section \ref{localtoglobal} presents the passage from local to global energy conservation. This is the only place where the regularity of the boundary of our domain comes into play. Finally, in section \ref{viscosity} we present the above-mentioned application concerning the vanishing viscosity limit.

Let us add a final remark concerning our assumptions: The hypothesis on the behavior of the pressure, near the boundary, is very weak and we don't see any way to remove it. This is because  in bounded domains the interior H\"older regularity of the pressure no longer automatically follows from that of the velocity. The H\"older spaces in the regularity assumption on the velocity, however, can presumably be replaced, e.g.,  by the critical Besov space from \cite{CCFS} or the Besov-$VMO$ condition from \cite{FW}; the main issue in such an improvement would be to transfer the elliptic estimates for the pressure from section~\ref{pressureest} to such Besov-type spaces. We prefer here to use H\"older spaces in order to avoid such difficulties and to keep the presentation simple.

\section{Weak solutions of the Euler equations defined on $(0,T)\times \Omega\,$}

%
We recall that with $\Omega$ denoting an open subset of $\bR^n$ a weak solution of the Euler equations   is a pair of distributions $(t,x)\mapsto (u(t,x),p(t,x)) \in (\mathcal D'((0,T) \times \Omega))^n\times \mathcal D'((0,T) \times \Omega)$ with $u \in L^q((0,T); L^2(\Omega)))^n$, for some $q\in [1,\infty]$,
which satisfies,  in the sense of distributions, the divergence free condition and the momentum equations:
\begin{equation}
\nabla\cdot u=0 \quad \hbox{in} \quad D'((0,T) \times \Omega) \quad \hbox{and} \quad   \del_t u + \nabla_x \cdot (u\otimes u) + \nabla_x p =0 \quad \hbox{in } (\mathcal D'((0,T)\times \Omega))^n 
\label{Eulerdistribution0}
 \end{equation}
 meaning, in particular, that for every $\Psi \in (\mathcal D ((0,T) \times \Omega))^n$
 \begin{equation}
   \la\la u, \partial_t\Psi   \ra\ra+ \la\la  u\otimes u, \nabla_x \Psi  \ra\ra + \la\la  p, \nabla_x \cdot \Psi \ra\ra=0\label{Eulerdistribution}\,,
  \end{equation}
 with $\la\la \cdot,\cdot \ra\ra$ denoting the  duality between $\mathcal D ' ((0,T) \times \Omega) $ and $\mathcal D ((0,T) \times \Omega\,)$.
 \begin{remark}
 \begin{itemize}
 \item Observe that the  condition $u \in L^q((0,T); L^2(\Omega))$ implies that $u\otimes u$ is well defined in  $D'((0,T)\times \Omega))\,.$

 \item Since $\mathcal D((0,T) \times \Omega)$ is the closure for the topology of  test functions of the tensor product $\mathcal D(0,T)\otimes \mathcal D(\Omega)$,  relation (\ref{Eulerdistribution0}) is equivalent to the relation:
 \begin{equation}
\forall \phi \in \mathcal D(\Omega) \,, \quad \del_t \la u ,\phi \ra + \la\nabla \cdot(u\otimes u ) ,\phi \ra +\la \nabla p,\phi \ra=0\,\label{reuler0}
\end{equation}
  in $\mathcal D'(0,T)\,,$ where  $\la\cdot,\cdot\ra $ denotes  the duality between $\mathcal D'(\Omega)$ and $\mathcal D (\Omega)\,.$

 \item Since $p$ is a distribution, it is locally of finite order (in $(x,t)$)  and therefore can be written as
 \begin{equation*}
 p(x,t) =(\frac {\del}{\del t})^k\nabla_x^l P(t,x)
 \end{equation*}
 with $k$ (resp.\ $l$) a finite integer (resp.\ finite multi-integer) and $P(t,x)\in L^\infty((0,T)  \times  \Omega).
 $
For both  the local result (cf.\ section \ref{duchonrobert0}) and  the global result, some extra regularity hypothesis of the pressure is required. With this assumption the impermeability boundary condition is not required for the local result. On the other hand, for global energy conservation (cf. section \ref {localtoglobal}), as expected, the impermeability boundary condition
\begin{equation}
u\cdot \vec n=0 \quad \hbox{on} \quad (0,T)\times \del\Omega \label{imp}
\end{equation}
is compulsory and, as usual, one observes that since $\nabla \cdot u =0$ in $(0,T)\times \Omega\,,$  relation (\ref{imp})  is   well defined at least in $C_{weak}((0,T);H^{-1/2} (\del \Omega))\,.$

 \end{itemize}
\end{remark}
\section{The local version of the Duchon-Robert Theorem \label{duchonrobert0}}

This section is devoted to the proof of a local energy conservation law in some time/space cylindrical domain
$$\tilde Q =(t_1,t_2)\times \tilde \Omega \subset\subset(0,T)\times \Omega.$$
No regularity hypothesis on $\Omega$ is required in this section other than being open and bounded. However it is assumed (without loss of generality) that  $\del \tilde \Omega$ is a $C^1$ manifold.

\begin{theorem} \label{localduchonrobert}
Let  $(u,p) \in L^q((0,T); L^2(\Omega  ))\times \mathcal D'((0,T)\times \Omega  )$, for some $q\in [1,\infty]$, be a weak solution of the Euler equations
\begin{equation}
\del_t u + \nabla \cdot (u\otimes u ) +\nabla p=0\,, \quad\nabla\cdot u=0,  \label{reuler}
\end{equation}
which in any open subset $\tilde Q = (t_1,t_2) \times \tilde \Omega \subset\subset(0,T)\times \Omega$ it satisfies the following two conditions:

\begin{enumerate}
\item ``Local in time regularity of the pressure near the boundary of $\tilde \Omega$".  For some $\gamma >0$ and for  $V_\gamma= \{x\in\tilde\Omega: d(x,\del\tilde \Omega)  <\gamma\}$  there exist $M_0(V_\gamma)>0$ and    $\beta(V_\gamma)>0$ such that
\begin{equation}
 p\in L^{3/2}((t_1,t_2); H^{-\beta(V_\gamma)}(V_\gamma))\le M_0(V_\gamma)<\infty;   \label{press0}
 \end{equation}

\item ``Interior $\frac13$ H\"older regularity": For some $\alpha(\tilde Q)>\frac 13$ and $M(\tilde Q)>0$ one has
\begin{equation}
\int_{t_1}^{t_2}  \|u(t,\cdot)\|^3_{  C^{0,\alpha({\tilde Q})} (\overline{\tilde \Omega})}dt \le M(\tilde Q)<\infty \label{local1}\,.
\end{equation}

\end{enumerate}
 Then $(u,p)$ satisfies in $ \tilde Q=(t_1,t_2)\times \tilde \Omega$ the local energy conservation:
\begin{equation}\label{localenergy}
\del_t\frac{|u|^2}2+\nabla_x\cdot\left(\left(\frac{|u|^2}2 +p\right)u\right)=0\quad \hbox{in }\, \mathcal  D'((t_1,t_2)\times  \tilde\Omega)\,.
\end{equation}
 \end{theorem}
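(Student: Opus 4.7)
My plan is to implement the Constantin--E--Titi mollification argument in the local setting, following Duchon--Robert, fixing a test function $\varphi\in\mathcal{D}(\tilde Q)$ and working on a slightly shrunk cylinder $\tilde Q'\subset\subset\tilde Q$ containing its support. Let $\eta_\eps$ be a standard radial mollifier on $\RR^n$ and set $u^\eps=u*\eta_\eps$ and $p^\eps=p*\eta_\eps$ (spatial convolution only), well defined and smooth in $x$ on $\tilde Q'$ once $\eps$ is small. Since convolution commutes with differentiation,
\begin{equation*}
\partial_t u^\eps +\nabla\cdot(u^\eps\otimes u^\eps)+\nabla p^\eps=-\nabla\cdot r^\eps,\qquad\nabla\cdot u^\eps=0,
\end{equation*}
with $r^\eps=(u\otimes u)*\eta_\eps-u^\eps\otimes u^\eps$ the CET commutator. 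Dotting with $u^\eps$ and using the divergence-free condition yields the smooth identity
\begin{equation*}
\partial_t\tfrac12|u^\eps|^2+\nabla\cdot\Bigl(\bigl(\tfrac12|u^\eps|^2+p^\eps\bigr)u^\eps\Bigr)+u^\eps\cdot(\nabla\cdot r^\eps)=0.
\end{equation*}
The strategy is to pair this identity against $\varphi$ and let $\eps\to 0$.

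To control the nonlinear commutator, I would invoke the Duchon--Robert algebraic identity, which rewrites $u^\eps\cdot(\nabla\cdot r^\eps)$ as a total divergence (whose flux vanishes in $\mathcal{D}'$ by standard mollification convergence in $L^3_{t,x}$) plus the dissipation-type remainder
\begin{equation*}
D^\eps(u)(t,x)=\tfrac14\int_{\RR^n}\nabla\eta_\eps(\xi)\cdot\bigl(u(t,x+\xi)-u(t,x)\bigr)\,\bigl|u(t,x+\xi)-u(t,x)\bigr|^2\,d\xi.
\end{equation*}
For $x$ in the support of $\varphi$ and $\eps$ smaller than the distance of that support to $\del\tilde\Omega$, the interior H\"older bound gives the pointwise estimate $|D^\eps(u)(t,x)|\lesssim \eps^{3\alpha(\tilde Q)-1}\|u(t,\cdot)\|^3_{C^{0,\alpha(\tilde Q)}(\overline{\tilde\Omega})}$, which integrated against $\varphi$ tends to zero because $3\alpha(\tilde Q)-1>0$ and $\int_{t_1}^{t_2}\|u(t,\cdot)\|^3_{C^{0,\alpha}}\,dt<\infty$ by hypothesis (\ref{local1}). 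The kinetic and self-transport terms $\tfrac12|u^\eps|^2$ and $\tfrac12|u^\eps|^2u^\eps$ converge to $\tfrac12|u|^2$ and $\tfrac12|u|^2u$ in $L^{3/2}_{t,x}$ and $L^1_{t,x}$ respectively on $\tilde Q'$, again by $u\in L^3_{t,x,\mathrm{loc}}$ and standard mollifier estimates.

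The main obstacle is the pressure-velocity term $p^\eps u^\eps$: unlike the case of $\RR^n$ or the torus, one cannot invert $-\Delta$ globally to deduce interior regularity of $p$, and the only a priori information near $\del\tilde\Omega$ is the very weak bound $p\in L^{3/2}_tH^{-\beta}(V_\gamma)$. To make sense of $pu$ as a distribution on $\tilde Q$ and justify $p^\eps u^\eps\to pu$ in $\mathcal{D}'$, I would first upgrade the pressure to, say, $L^{3/2}_tL^s_{\mathrm{loc}}(\tilde\Omega)$ for some $s\ge 3/2$ via Proposition~\ref{lempress}: the Euler equation gives $-\Delta p=\sum_{i,j}\partial_i\partial_j(u_iu_j)$, and one splits $p$ into a piece generated by a cutoff of $u\otimes u$ (which inherits interior H\"older regularity from that of $u$ through standard elliptic estimates) plus a harmonic remainder whose interior regularity is controlled by the $H^{-\beta}(V_\gamma)$ bound. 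With this in hand, $p^\eps u^\eps\to pu$ in $L^1_{\mathrm{loc}}(\tilde Q)$ by H\"older's inequality and mollifier convergence, and combining with the vanishing of the commutator delivers (\ref{localenergy}) in $\mathcal{D}'((t_1,t_2)\times\tilde\Omega)$. This pressure upgrade, rather than the commutator estimate itself, is the crux of the argument in the bounded-domain setting.
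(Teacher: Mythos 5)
Your proposal is correct and follows essentially the same route as the paper: a Constantin--E--Titi/Duchon--Robert mollification argument in which the commutator term vanishes at rate $\eps^{3\alpha(\tilde Q)-1}$ thanks to hypothesis (\ref{local1}), with the crux---as you rightly identify---being the interior upgrade of the pressure, carried out by exactly the paper's decomposition in Proposition \ref{lempress} (Newtonian potential of the cut-off $u\otimes u$, plus a remainder that is harmonic near the interior subdomain and is controlled through the $H^{-\beta}(V_\gamma)$ hypothesis). The only cosmetic differences are that the paper mollifies in time as well as space and removes the time mollification first via the Aubin--Lions theorem (using $\del_t u\in L^{3/2}((t_1,t_2);H^{-1}(\Omega_2))$ from Corollary \ref{corollary}), whereas you mollify in space only---legitimate, since the equation together with the pressure estimate makes $u^\eps$ absolutely continuous in time with values in $C(K)$, justifying the chain rule you use when pairing with $u^\eps$---and that the paper proves the stronger conclusion $p\in L^{3/2}((t_1,t_2);C^{0,\alpha}(\overline{\Omega_2}))$, of which your $L^{3/2}_t L^s_{\mathrm{loc}}$ upgrade is a sufficient weakening given that $u$ is locally bounded.
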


 The proof of the above theorem is divided into three subsections. First, standard notations are introduced  and an extension-restriction Proposition is proven. Then an
 ``interior estimate"  for the pressure is deduced from   hypothesis (\ref{press0}). Eventually, the proof is accomplished by showing formula (\ref{localenergy}) for test functions of the form $\chi(t)\phi(x)$ with $\chi\in \mathcal D (t_1,t_2)$ and $\phi \in \mathcal D(\tilde \Omega)$, and then the proof  is extended by  the density of finitely many combinations of such test functions in $\mathcal D((t_1,t_2)\times \tilde\Omega)$.
 \subsection{Notations and extension-restriction construction}
For the sake of presentation convenience, we consider in this section, and in further treatments, standard mollifiers in the space $\bR_z^m$ as well as  a family of well adapted open sets and restriction or regularization of distributions. Consequently, the notations and properties will be adapted to  spatial domains or  ``time-space"    cylindrical domains, i.e.,
 $\bR_x^n$ or $\bR_{t,x}^{n+1}=\bR_t\times\bR_x^n\,.$ Let $\rho \in \mathcal D(\bR)$ be a non-negative function   with support in $\{ s \in \bR:\,|s|<1\}$ and  of total  integral $1$, i.e.,
 \begin{equation}
  \int_{\bR} \rho(s)ds =1\,.
 \end{equation}
We denote by $z\mapsto \rho_\sigma(z)$ the mollifier in $\bR^m_z$ given by
\begin{equation}
z\mapsto \rho_\sigma(z) = \frac 1{\sigma^m} \rho\left(\frac{|z|}{\sigma}\right)\,.
\end{equation}

For an open set $\tilde Q \subset\bR_z^m$ and a distribution $T\in \mathcal D'(\tilde Q)$, the relation $T=0$ is equivalent to the property that for any given test  function $\Psi\in\mathcal D (\tilde Q)$ (fixed for the rest of the argument) one has
\begin{equation}
\la\la T,\Psi\ra\ra =0\,.
\end{equation}
By definition $\Psi $ is compactly supported in $ \tilde{Q}\,.$ Its support will be denoted by $S_\Psi$, and for $\eta>0$, small enough, one can introduce three open sets with the following properties:
\begin{equation}\label{nestedsets}
\begin{aligned}
&S_\Psi\subset\subset Q_3\subset\subset Q_2\subset\subset Q_1 \subset\subset \tilde Q,\\
&d(S_\Psi, \bR^m\backslash Q_3)>\eta, \, d(Q_3, \bR^m\backslash Q_2)>\eta, \, d(Q_2, \bR^m\backslash Q_1)>\eta\,,  \hbox{ and}\,\,  d(Q_1, \bR^m\backslash \tilde Q)>\eta\,.
\end{aligned}
\end{equation}
Next, in order to extend to $\mathcal D'(\bR^m)$ distributions defined as elements of $\mathcal D'(\tilde Q)$, one introduces a smooth function $I_{2,\eta}\in \mathcal D(\bR_z^m)$ satisfying the following properties:
\begin{equation}
\hbox{for every}\,\, z\in Q_2 \,\, \hbox{one has}\,\, I_{2,\eta} (z)=1; \,\, \hbox{and for every} \,\, z\notin Q_1 \,\, \hbox{one has} \,\, I_{2,\eta}(z)=0\,. \label{extension00}
\end{equation}
As a consequence the support of $I_{2,\eta}$ is contained in $\overline{Q_1}$ and any  distribution $T\in \mathcal D'(\tilde Q)$ generates a distribution  $\mathcal D'(\bR^m_z)$ denoted $I_{2,\eta}T $ or $\overline T$ according to the formula:
\begin{equation}
 \la \la \overline T, \overline \Psi \ra\ra = \la \la I_{2,\eta}  T ,\overline \Psi \ra\ra= \la \la T, I_{2,\eta}\overline \Psi\ra\ra\,,
\end{equation}
for every $\overline\Psi \in \mathcal D(\bR^m_z)$.

For such a construction for a given $\Psi \in \mathcal D (\tilde Q)$, with support in $Q_3$,  the function $I_{2,\eta}\Psi$  belongs to $\mathcal D (\tilde Q)$ and it coincides with $\Psi$ in $\tilde Q$.  By an abuse of notation, with no risk of confusion, $I_{2,\eta}\Psi $ is identified with $\Psi$ and one has the following:

\begin{proposition} \label{extension-restriction}
 For any scalar or tensor  valued function $ w\in L^p(Q_1)$  (with $1\le p\le \infty$), and  for any $f\in C^\infty$, satisfying $f(0)=0$, one has the following properties:
$\overline {f} := I_{2,\eta} f$ given by the formula
\begin{equation}
\la\la\overline{f},\Phi\ra\ra = \la\la I_{2,\eta} f ,\Phi \ra \ra := \int_{\bR^m}  f(w(z)) I_{2,\eta} (z) \Phi(z)dz\,,
\end{equation}
for every $\Phi\in \mathcal D(\bR^m_z)$,
is a well defined distribution. Moreover, if  $\Psi\in \mathcal D(\bR^m_z)$,  with support in $Q_3\,,$ the following relation holds:
\begin{equation}\label{extension1}
 \la \la \overline f(w) , \Psi \ra\ra=  \int_{Q_3}  f(w(z))  I_{2,\eta}(z)\Psi(z) dz = \la\la  f(\overline w), \Psi\ra\ra\,.
\end{equation}
Furthermore, for any multi-order derivative $\,D^\alpha$ also the following relation holds:
\begin{equation}
 \la\la D^\alpha \overline{f(w)}, \Psi \ra\ra =\la\la D^\alpha f(\overline{w})), \Psi\ra\ra\,.\label{deriv0}
\end{equation}
Finally, for every $\sigma >0 $  small enough such that
\begin{equation}
0<\sigma<\frac\eta 2<\frac{d(Q_3,\bR_z^m \backslash \overline Q_ 2)}2\,,
\end{equation}
one has:
\begin{equation} \la \la \rho_\sigma \star \overline{f(w)},\Psi\ra	\ra=
\la \la \rho_\sigma \star f(\overline{w}),\Psi\ra	\ra\,. \label{rcommutte}
\end{equation}
\end{proposition}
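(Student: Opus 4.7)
The whole proposition rests on a single observation: every identity claimed pairs against a test function whose support stays inside the set $Q_3$, on which $I_{2,\eta}\equiv 1$ and therefore $\overline w = I_{2,\eta}w = w$. Thus on the effective domain of integration the cutoff disappears and the composition of $f$ with $\overline w$ collapses to the composition of $f$ with $w$. The nested chain $S_\Psi\subset\subset Q_3\subset\subset Q_2\subset\subset Q_1\subset\subset\tilde Q$ with the separation $\eta$ is engineered precisely to absorb the mild enlargements of support produced by differentiation and, more importantly, by convolution with $\rho_\sigma$. The role of the hypothesis $f(0)=0$ is that $\overline w$ vanishes outside $Q_1$, so $f(\overline w)$ is automatically compactly supported and defines a distribution on all of $\mathbb R^m$.

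\textbf{Well-definedness and equation (\ref{extension1}).} I would first note that $f\circ w$ is measurable on $Q_1$ and that multiplication by the bounded, compactly supported $I_{2,\eta}$ gives a compactly supported, locally integrable function on $\mathbb R^m$ (this integrability being guaranteed by the integrability assumption on $w$ together with the smoothness of $f$, and, in the applications of the paper, by the fact that $w$ will in addition be bounded on $Q_1$). Its pairing with $\Phi\in\mathcal D(\mathbb R^m)$ is hence a bona fide distribution. To prove (\ref{extension1}), take $\Psi$ with $S_\Psi\subset Q_3$. Because $I_{2,\eta}\equiv 1$ on $Q_2\supset Q_3$, the left-hand side reduces to $\int_{Q_3} f(w)\Psi\,dz$. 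On the other hand $\overline w(z)=I_{2,\eta}(z)w(z)=w(z)$ for $z\in Q_3$, so $f(\overline w)\Psi=f(w)\Psi$ pointwise on $Q_3$, which yields the same integral. The two sides of (\ref{extension1}) therefore coincide.

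\textbf{Equation (\ref{deriv0}).} This is obtained by passing derivatives to the test function. Using the definition of the distributional derivative,
\begin{equation*}
\la\la D^\alpha\overline{f(w)},\Psi\ra\ra=(-1)^{|\alpha|}\la\la\overline{f(w)},D^\alpha\Psi\ra\ra,
\end{equation*}
and since $\operatorname{supp}(D^\alpha\Psi)\subset\operatorname{supp}\Psi\subset Q_3$, I can apply (\ref{extension1}) with $D^\alpha\Psi$ in place of $\Psi$. Then an integration by parts on the $f(\overline w)$ side (which is legitimate because $\overline w$ is smoothly cut off, so $f(\overline w)$ is compactly supported) gives $\la\la D^\alpha f(\overline w),\Psi\ra\ra$. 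This is purely formal bookkeeping once (\ref{extension1}) is in hand.

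\textbf{Equation (\ref{rcommutte}) and the main technical point.} For the mollification identity I would write, for $\Psi$ with $S_\Psi\subset Q_3$,
\begin{equation*}
\la\la\rho_\sigma\star\overline{f(w)},\Psi\ra\ra=\iint \rho_\sigma(z-y)\,I_{2,\eta}(y)f(w(y))\,\Psi(z)\,dy\,dz,
\end{equation*}
and similarly for $f(\overline w)$ but with $I_{2,\eta}(y)f(w(y))$ replaced by $f(I_{2,\eta}(y)w(y))$. The integrand is supported where $z\in Q_3$ and $|z-y|<\sigma$, hence where $y$ lies in the $\sigma$-neighborhood of $Q_3$. The condition $\sigma<\eta/2<d(Q_3,\mathbb R^m\setminus Q_2)$ forces $y\in Q_2$, where $I_{2,\eta}(y)=1$ and thus $\overline w(y)=w(y)$; consequently both integrands reduce to $\rho_\sigma(z-y)f(w(y))\Psi(z)$ on the effective domain. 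The only genuine subtlety in the entire proposition is this distance bookkeeping; the rest of the argument is a mechanical unwinding of the definitions, and the chain of separations in (\ref{nestedsets}) was built precisely to make it go through.
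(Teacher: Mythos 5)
Your proof is correct and takes essentially the same route as the paper's: each identity is reduced to the fact that $I_{2,\eta}\equiv 1$ on $Q_2$, which contains both $\operatorname{supp}\Psi\subset Q_3$ (for \eqref{extension1} and, after passing $D^\alpha$ to the test function, for \eqref{deriv0}) and, because $\sigma<\eta/2$, the $\sigma$-neighborhood of $Q_3$ on which the convolution integrands in \eqref{rcommutte} live. Your explicit hedge about the local integrability of $f(w)$ for merely $L^p$ data, and the role of $f(0)=0$ in making $f(\overline w)$ a compactly supported distribution, is if anything slightly more careful than the paper, which passes over these points silently.
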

\begin{proof}
Formula  (\ref{extension1}) is a direct consequence of the fact that on the support  of $\Psi$ one has $I_{2,\eta}=1\,.$
By the same token, for  formula (\ref{deriv0}) one returns to the definition of derivatives in the sense of distributions and writes:
  \begin{equation}
  \begin{aligned}
 \la\la D^\alpha \overline{f(w )}, \Psi \ra\ra  =(-1)^{|\alpha|}\la\la \overline{f(w )}, D^\alpha \Psi \ra\ra
  =(-1)^{|\alpha|}\la\la {f(w )}, I_{2,\eta} D^\alpha \Psi \ra\ra&
  \\
  =(-1)^{|\alpha|}\int_{\bR_z^m} f(I_{2,\eta}(z) w(z))  D^\alpha \Psi (z)dz=
 (-1)^{|\alpha|}\la\la  f( \overline{w  }), D^\alpha \Psi \ra\ra
  =  \la\la D^\alpha {f(\overline{w} )}, \Psi \ra\ra.  &
 \end{aligned}
 \end{equation}
 Eventually, for the proof of (\ref{rcommutte}), one observes that
 \begin{equation}
\hbox{whenever} \,\,d(z,Q_3)<\eta \,\, \hbox{then}\,\,   z\in Q_2 \,\, \hbox{and consequently}\,\,I_{2,\eta} (z)=1\,.
 \end{equation}
 As a result one has:
 \begin{equation}
 \begin{aligned}
& \la \la \rho_\sigma \star \overline{f(w)},\Psi\ra	\ra
 =\int_{\bR^m_z} f(w(z)) \Bigg( I_{2,\eta}(z) \int_{\bR^m_y}\rho_{\sigma}(z-y)  (y)\Psi(y)dy\Bigg)dz\\
 &=\int_{\bR^m_z} f(w(z)) \Bigg(\int_{\bR^m_y}\rho_{\sigma}(z-y)   \Psi(y)dy\Bigg)dz\\
 &=\int_{\bR^m_z} \Bigg( f(I_{2,\eta} (z) w(z))  \int_{\bR^m_y}\rho_{\sigma}(z-y)   \Psi(y)\Bigg)dy
 =\la \la \rho_\sigma \star f(\overline{ w}),\Psi\ra	\ra \,. \label{basic0}
 \end{aligned}
 \end{equation}
 \end{proof}
Below time-space cylindrical domains are considered. Let $\tilde Q =(t_1,t_2)\times \tilde \Omega \subset \subset(0,T)\times \Omega$.
For a given function $\psi\in \mathcal D(\tilde Q)$ with support contained in $(t_a,t_b) \times S_\psi$ we introduce  the following open sets satisfying:
$$
S_\psi\subset\subset \Omega_{3 } \subset\subset \Omega_{2 }\subset\subset \Omega_{1}\subset\subset \tilde \Omega\subset \bR_x^n\,.
$$
Consequently, one can choose  $\eta>0$, small enough, such that:
 \begin{equation}
 d(S_\psi, \bR^n\backslash \Omega_3)>\eta, \, d(\Omega_3, \bR^n\backslash \Omega_2)>\eta, \, d(\Omega_2, \bR^n\backslash \Omega_1)>\eta\,,  \hbox{ and}\,\,  d(\Omega_1, \bR^n\backslash \tilde \Omega)>\eta\,.
 \end{equation}
 For a time interval $(t_1,t_2)\subset (0, T)$,  one can choose  $\tau >0$ small enough, such that Proposition \ref{extension-restriction} will be applied to the open sets:
 \begin{equation}
 \begin{aligned}
 &(t_a,t_b)\times S_\psi \subset\subset Q_3= (t_1+3\tau, t_2-3\tau)\times \Omega_3\subset\subset Q_2= (t_1+2\tau, t_2-2\tau)\times \Omega_2\\
 &\subset\subset Q_1= (t_1+ \tau, t_2- \tau)\times \Omega_1\subset\subset \tilde Q= (t_1 , t_2 )\times \tilde \Omega\,. \label{openset}\\
\end{aligned}
 \end{equation}
 In the same way the extension process and notation are adapted as follows:
 One uses   functions  $I_{2,\tau}\in \mathcal D(\bR_t) $ , $I_{2,\eta}\in \mathcal D(\bR_x^n)$  and $I_{2,\sigma}$ with the following properties:
 \begin{equation}
 \begin{aligned}
& I_{2,\tau} (t)=1 \,\hbox{whenever}\,\, t\in (t_1+2\tau, t_2-2\tau);\, \hbox{and} \,\,
I_{2,\tau}(t)=0  \, \hbox{whenever}\,\,t\notin(t_1+\tau, t_2-\tau),\\
& I_{2,\eta} (x)=1 \,\hbox{whenever}\,\, x\in {\Omega_2};\, \hbox{and} \,\, I_{2,\eta}(x)=0 \, \hbox{whenever}\,\,   x\notin \Omega_1,\\
& I_{2,\sigma}(x,t) = I_{2,\tau} (t) I_{2,\eta}(x)\,.
\label{extension01}
\end{aligned}
 \end{equation}
 As above any distribution $T\in \mathcal D'(\tilde Q)$ is extended as a distribution in $\mathcal D'(\bR_t\times\bR^n_x)$ to
 \begin{equation}
 \overline T= I_{2,\sigma} T\,,
 \end{equation}
 and the same way the mollifiers $\rho_\sigma$ are replaced by the mollifiers:
 \begin{equation}
 \rho_\sigma(t,x)= \rho_{\kappa,\eta}(t,x)=\frac1{\kappa} \rho\left(\frac{|t|}{\kappa} \right)\frac 1{\epsilon^n} \rho\left(\frac{|x|}{\epsilon}\right),\label{mollifiers}
 \end{equation}
for every $\kappa\in(0,\frac\tau 2)$, and every  $\epsilon \in (0,\frac \eta 2)$.

 Finally,  for $w\in \mathcal D'(\bR_x^n)$ and for $W\in \mathcal D'(\bR_t\times \bR_x)$ we use the following notation:
 \begin{equation}
 \left(w\right)^\epsilon= \rho_\epsilon \star_x  w\quad\hbox{and} \quad \left(W\right)^{\epsilon,\kappa}=\rho_\sigma \star W = \rho_\kappa\star_t \rho_\epsilon \star_x W.
 \end{equation}

\subsection{Local estimate on the pressure}\label{pressureest}

With $\alpha$ and $\beta $ denoting the numbers $\alpha(\tilde Q)>\frac13$ and  $\beta(V_\gamma)\,$
  one has the following:

 \begin{proposition}\label{lempress}
 Let (u,p) be a weak solution of the Euler equations which satisfies in $\tilde Q=(t_1,t_2)\times \tO$ the hypothesis of Theorem  \ref{localduchonrobert}. Then the restriction of the pressure $p$ to $Q_2=(t_1,t_2)\times\Omega_{2}$ belongs to the space $L^{3/2}((t_1,t_2);C^{0,\alpha}(\overline{\Omega_{2}}))  $ and satisfies the estimate:
 \begin{equation}
  \|p\|_{L^{3/2}((t_1,t_2);C^{0,\alpha}(\overline{ {\Omega_{2}  }})) }
\le C(\|u\|_{ L^{3} ((t_1,t_2); C^{\alpha} (\tilde \Omega ))} ,\|p\|_{L^{3/2}((t_1,t_2); H^{-\beta}(V_\gamma))} )\,.\label{hypopressure}
 \end{equation}

 \end{proposition}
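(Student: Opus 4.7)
The natural plan is to split $p = p_1 + h$ on $\tilde\Omega$, where $p_1$ is a Newtonian potential of a localized source and $h$ is harmonic on an interior subset. Pick a cutoff $\chi\in C_c^\infty(\tilde\Omega)$ such that $\chi\equiv 1$ on a neighborhood of $\overline{\Omega_2}$ and the open connected set $W := \{\chi=1\}^\circ$ has non-empty intersection with $V_\gamma$. Such a $\chi$ exists because $\Omega_2\subset\subset\tilde\Omega$ and $V_\gamma$ is a collar of $\partial\tilde\Omega$: one just needs $\chi$ to vanish only inside a thin collar of $\partial\tilde\Omega$ of width strictly less than both $\gamma$ and $d(\overline{\Omega_2},\partial\tilde\Omega)$. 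Set
$$p_1(t,\cdot) := (-\Delta)^{-1}\partial_i\partial_j(\chi u_iu_j)(t,\cdot)\quad\text{on }\mathbb{R}^n,$$
defined via convolution with the Newton potential (legitimate because $\chi u\otimes u$ is compactly supported and bounded), and put $h := p-p_1$ on $\tilde\Omega$. Taking the divergence of the Euler equation gives $-\Delta p = \partial_i\partial_j(u_iu_j)$ in $\mathcal D'(\tilde\Omega)$, hence $-\Delta h = \partial_i\partial_j\bigl((1-\chi)u_iu_j\bigr)$, a distribution supported in $\{\chi<1\}$ and thus vanishing on $W$. So $h$ is distributionally---hence classically---harmonic on $W$.

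For $p_1$, the operator $T_{ij}:=\partial_i\partial_j(-\Delta)^{-1}$ is a Calder\'on-Zygmund singular integral, bounded on the H\"older seminorm of $C^{0,\alpha}(\mathbb{R}^n)$ for each $\alpha\in(0,1)$; on compactly supported input, the $|x|^{-n}$-decay of the kernel also controls the $L^\infty$ norm, so
$$\|p_1(t,\cdot)\|_{C^{0,\alpha}(\mathbb{R}^n)} \le C(\chi)\,\|u(t,\cdot)\|_{C^{0,\alpha}(\tilde\Omega)}^2.$$
Raising to the power $3/2$ and integrating in time yields $\|p_1\|_{L^{3/2}((t_1,t_2);C^{0,\alpha}(\mathbb{R}^n))} \le C\,\|u\|_{L^3((t_1,t_2);C^{0,\alpha}(\tilde\Omega))}^2$. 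For $h$, fix any open $A$ with $\overline{A}\subset W\cap V_\gamma$; on $A$ one has $h = p-p_1$, so hypothesis (\ref{press0}) together with the $p_1$-bound yields
$$\|h\|_{L^{3/2}(H^{-\beta}(A))} \le \|p\|_{L^{3/2}(H^{-\beta}(V_\gamma))}+C\,\|u\|_{L^3(C^{0,\alpha}(\tilde\Omega))}^2.$$
The standard interior estimate for harmonic functions on a connected open set (chained local mean-value inequalities, or equivalently real analyticity plus compactness) then gives
$$\|h(t,\cdot)\|_{C^{0,\alpha}(\overline{\Omega_2})} \le C(W,\Omega_2,A,\beta)\,\|h(t,\cdot)\|_{H^{-\beta}(A)},$$
and combining with the $p_1$-bound via $p = p_1 + h$ on $\Omega_2$ produces the estimate (\ref{hypopressure}).

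The step I expect to require the most care is the $C^{0,\alpha}(\mathbb{R}^n)$-boundedness of $T_{ij}$ applied to compactly supported data: singular integrals are not bounded on $L^\infty$, so one must combine the classical CZ-H\"older seminorm estimate with the kernel decay forced by compact support in order to control the full norm---and this is the sole source of the dependence of the constant on the geometry of $\tilde\Omega$. The other delicate point is the arrangement of the cutoff $\chi$ so that the harmonic domain $\{\chi=1\}^\circ$ simultaneously contains $\overline{\Omega_2}$ and reaches into $V_\gamma$, since the only a priori information on $p$ lives there.
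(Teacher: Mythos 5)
Your decomposition $p=p_1+h$ and the Calder\'on--Zygmund bound for $p_1$ are sound and parallel the paper's treatment of its Newtonian-potential piece (the paper estimates $w_1=K_n\star[\tilde\eta\sum\del^2_{x_ix_j}(u_iu_j)]$ via H\"older elliptic theory, citing Krylov), and your bookkeeping $L^3\to L^{3/2}$ in time is fine. The genuine gap is the harmonic-function step: the claimed inequality $\|h(t,\cdot)\|_{C^{0,\alpha}(\overline{\Omega_2})}\le C(W,\Omega_2,A,\beta)\,\|h(t,\cdot)\|_{H^{-\beta}(A)}$, for $h$ harmonic on the connected open set $W$ and $A$ an \emph{arbitrary} small open subset of $W\cap V_\gamma$, is false. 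Propagating a bound from a small set $A$ to a distant compact subset of $W$ is quantitative unique continuation, which holds only in conditional (three-sphere, log-convexity) form and requires an a priori bound of $h$ on all of $W$ --- which you do not have, since the hypotheses of Theorem \ref{localduchonrobert} give no quantitative control of $p$ in the deep interior of $\tilde\Omega$ (that control is exactly what Proposition \ref{lempress} is supposed to produce, so invoking it would be circular). Concretely, for $n=2$, identify $\bR^2$ with $\mathbb{C}$, take $W$ the unit disk, $A\subset\{|z|<1/4\}$, and $\Omega_2\supset\{|z|\le 3/4\}$: the harmonic functions $h_k=\mathrm{Re}\,z^k$ satisfy $\|h_k\|_{H^{-\beta}(A)}\le\|h_k\|_{L^2(A)}\lesssim 4^{-k}$ while $\sup_{\Omega_2}|h_k|=(3/4)^k$, so the ratio grows like $3^k$. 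Chained mean-value inequalities cannot rescue this, because the mean value property bounds $h(x)$ only by averages over balls on which $h$ is \emph{already} controlled, and ``analyticity plus compactness'' fails for the same reason: the set of harmonic functions with $\|h\|_{H^{-\beta}(A)}\le 1$ is unbounded on $\Omega_2$. Connectedness of $W$ is not the relevant hypothesis; the topology of the control region is.

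The gap is repairable inside your own scheme, because you in fact control $h=p-p_1$ in $H^{-\beta}$ on \emph{all} of $W\cap V_\gamma$, not merely on a small $A$: taking the dead zone of $\chi$ of width strictly less than $\gamma$, the set $W\cap V_\gamma$ is an annular collar that \emph{surrounds} $\Omega_2$. Interior elliptic estimates for harmonic functions upgrade the $H^{-\beta}$ bound to a $C^1$ bound on a compactly contained sub-collar; since the boundary of the enclosed region $\{x\in\tilde\Omega:\ d(x,\del\tilde\Omega)>c\}$ lies in that sub-collar, the maximum principle transports the sup bound to $\overline{\Omega_2}$, and gradient estimates then give the $C^{0,\alpha}$ norm. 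This enclosure-plus-maximum-principle mechanism is precisely what the paper arranges, by a different device: it cuts off $p$ itself, setting $w=\tilde\eta p$, so that $w$ solves a Dirichlet problem with \emph{zero} boundary data on $\del\tilde\Omega$; its harmonic piece $w_3$ then has boundary values $-w_1-w_2$ already estimated, and the maximum principle applies directly on $\tilde\Omega$. The price the paper pays is the commutator $R=-2\nabla_x\tilde\eta\cdot\nabla_x p-p\Delta\tilde\eta$, supported in $V_\gamma$ and estimated through hypothesis \eqref{press0} together with the smoothness of the Newtonian kernel away from the diagonal --- a term your decomposition avoids, which is a genuine (minor) simplification once the harmonic step is repaired as above.
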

 \begin{proof} Taking the divergence of \eqref{Eulerdistribution0}, one deduces that $p$ satisfies in $\mathcal D'((0,T)\times \Omega)$ the relation
  \begin{equation}\label{poisson}
  -\Delta p =\sum_{i,j=1}^n\del^2_{x_i,x_j}\left(u_i u_j\right)\,. \\
  \end{equation}
	{One introduces a function $x\mapsto \tilde\eta(x) \in \mathcal D(\bR^n)$ with support in $  \tilde\Omega$   which is equal to $1$ in $\tilde \Omega \cap \{d(x,\del\Omega_1 >\frac \gamma 2\} $ and equal to $0$ for $x\in \tilde\Omega_1\cap \{d(x,\del\Omega_1)<\frac\gamma 4 \}\,.$
 In particular $\tilde \eta$ is equal to $1$ in a neighborhood of $\Omega_2$ and its gradient and Laplacian are equal to $0$ outside $V_\gamma$. Note that we may assume, without loss of generality, $\gamma<\frac\eta 4$.

 Then restricted to $\tilde\Omega$ the distribution $\tilde \eta p$ (which coincides with $p$ in $\Omega_1$) is a solution of the Dirichlet boundary value problem:
  \begin{equation}
  \begin{aligned}
& -\Delta (\tilde \eta p) =\tilde \eta \sum\del^2_{x_i,x_j} \left(u_i u_j\right)+R  \quad \text{in $\tilde\Omega$}
\\
&\quad \text{with $\tilde\eta p=0$ on $\del\tilde\Omega$ and with $R=-2 \nabla_x \tilde \eta \cdot \nabla_x p -p \Delta \tilde \eta$.}
\label{local}
\end{aligned}
 \end{equation}
Then estimate (\ref{hypopressure}) follows, by virtue of classical elliptic theory, from the H\"older assumption on $u$ and on the fact that $R$ is identically equal to $0$ in a neighborhood of $\Omega_2$.}


{ More precisely, we argue as follows:
First recall that $w:=\tilde \eta p$ as a distribution coincides with $p$ on $\Omega_1\,$, and that it is a solution of the Dirichlet problem~\eqref{local}.
 Writing $K_n(|x|)$ for the fundamental solution of the equation $-\Delta K_n= \delta_0$ in $\bR^n$,  $w$ can be expressed as follows:}
{
\begin{equation}
\begin{aligned}
&w= w_1+ w_2+w_3,\,\quad\text{where}\\
&w_1=  K_n\star \left[\tilde \eta \sum\del^2_{x_i,x_j}\left( u_i u_j\right)\right], \\
&w_2 = K_n\star R,
\end{aligned}
\end{equation}
and $w_3$ is the solution of
\begin{equation}
 \text{$-\Delta  w_3=0$ in $\tilde\Omega$ and $w_3=-w_1-w_2$ on $\del\tilde\Omega$.}
\end{equation}}
 {By virtue of standard H\"older elliptic regularity  estimates on the expression
 \begin{equation}
w_1 = K_n\star \left[\tilde \eta \sum\del^2_{x_i,x_j}\left(u_i u_j\right)\right](x) =\int_{\bR^n} K_n(|x-y|)(\tilde\eta \sum\del^2_{y_i,y_j}u_i u_j)dy\label{kril}
 \end{equation}
 (cf. \cite{Krylov}), one deduces from  (\ref{kril}) and (\ref {local1}) the relation:
 \begin{equation}
  \|w_1\|_{L^{3/2}((t_1,t_2);C^{0,\alpha}( { {\bR^n }})) }
\le C(\|u\|_{ L^{3} ((t_1,t_2); C^{\alpha} (\overline{\tilde \Omega }))} )\,.\label{hypopressure2}
 \end{equation}}

{ On the other hand, for $y\in \overline V_\gamma $ and $x\in \overline \Omega_1$, one has
 \begin{equation}
 |x-y|>\frac \eta 4.
 \end{equation}
 Since $\operatorname{supp}R \subset V_\gamma$,  by virtue of the standard elliptic estimates, this implies
 \begin{equation}
 \begin{aligned}
 & \|w_2\|_{L^{3/2}((t_1,t_2);C^{0,\alpha}(\overline{ {\Omega_{2}  }})) }
\le C( \|R\|_{L^{3/2}((t_1,t_2); H^{-\beta-1}(  V_\gamma))} )\le C( \|p\|_{L^{3/2}((t_1,t_2); H^{-\beta}( V_\gamma))} )\,.\label{hypopressure5}\\
  \end{aligned}
\end{equation}}
{With (\ref{hypopressure2}) and (\ref{hypopressure5}) one obtains eventually, for $w_3=-(w_1+w_2)$ on $\del\tilde\Omega$, the estimate
\begin{equation}
  \| w_3  \|_{L^{3/2}((t_1,t_2);C^{0,\alpha}(\overline{ { \Omega_{1}  }})) } 
 \le C(\|u\|_{ L^{3} ((t_1,t_2); C^{\alpha} (\overline{\tilde \Omega }))} + \|p\|_{L^{3/2}((t_1,t_2); H^{-\beta}( V_\gamma))})
\end{equation} Combining these estimates for $w_1+w_2+w_3$, restricted to $\Omega_2$, completes the proof.}



 \end{proof}
 From this proposition   one deduces the following:
 \begin{corollary} \label{corollary} Under the hypotheses of Proposition \ref{lempress}, the restriction of $\del_t u$ to  $Q_2=(t_1,t_2)\times \Omega_{2}$  is  bounded in
 $$L^{3/2}((t_1,t_2);H^{-1}( { {\Omega_{2}  }}))\,.$$
 \end{corollary}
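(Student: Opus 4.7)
The plan is to read off the bound directly from the momentum equation
\[
\partial_t u = -\nabla_x\cdot(u\otimes u) - \nabla_x p,
\]
which holds in $\mathcal D'((t_1,t_2)\times\Omega_2)$, and to estimate each of the two terms on the right-hand side in $L^{3/2}((t_1,t_2);H^{-1}(\Omega_2))$ separately.

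First I would handle the convective term. The interior H\"older hypothesis (\ref{local1}) gives $u\in L^{3}((t_1,t_2);C^{0,\alpha}(\overline{\tilde\Omega}))$, hence, since $\Omega_2\subset\subset\tilde\Omega$ is bounded, $u\otimes u\in L^{3/2}((t_1,t_2);L^{\infty}(\Omega_2))\hookrightarrow L^{3/2}((t_1,t_2);L^{2}(\Omega_2))$. Taking one spatial derivative costs exactly one derivative in the Sobolev scale, so $\nabla_x\cdot(u\otimes u)\in L^{3/2}((t_1,t_2);H^{-1}(\Omega_2))$ with norm controlled by $\|u\|_{L^{3}((t_1,t_2);C^{0,\alpha}(\overline{\tilde\Omega}))}^{2}$.

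Next I would treat the pressure gradient by invoking Proposition \ref{lempress}, which is the essential ingredient: it gives $p\in L^{3/2}((t_1,t_2);C^{0,\alpha}(\overline{\Omega_2}))$, and in particular $p\in L^{3/2}((t_1,t_2);L^{2}(\Omega_2))$ on the bounded set $\Omega_2$. One derivative again yields $\nabla_x p\in L^{3/2}((t_1,t_2);H^{-1}(\Omega_2))$, with norm bounded by the right-hand side of (\ref{hypopressure}).

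Adding the two contributions closes the estimate. There is no real obstacle here beyond tracking the embeddings on the bounded open set $\Omega_2$; all of the work has already been absorbed into Proposition \ref{lempress}, which is precisely what upgrades the very weak distributional pressure hypothesis (\ref{press0}) to something strong enough to be differentiated once in $H^{-1}$. The only point worth noting is that the identity for $\partial_t u$ is first a distributional statement on $\tilde Q$, but once the right-hand side is shown to lie in $L^{3/2}((t_1,t_2);H^{-1}(\Omega_2))$, the same regularity is inherited by $\partial_t u$ on $Q_2$.
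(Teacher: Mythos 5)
Your proposal is correct and follows exactly the paper's argument: the paper's own (very terse) proof writes $\del_t u = -\nabla\cdot\left((u\otimes u) + pI\right)$ and appeals to the estimate (\ref{hypopressure}) from Proposition \ref{lempress}, which is precisely the decomposition and key ingredient you use. You merely spell out the embeddings ($u\otimes u$ and $p$ in $L^{3/2}((t_1,t_2);L^2(\Omega_2))$, losing one derivative into $H^{-1}$) that the paper leaves implicit.
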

 \begin{proof}
Thanks to  the relation
\begin{equation}
\del_t u =-\nabla \cdot ((u\otimes u) +  pI)
\end{equation}
the  proof follows from estimate (\ref{hypopressure}).
 \end{proof}

\subsection{Completion of the proof of Theorem \ref{localduchonrobert}}

Let us start   by considering a test function $\Psi=\chi(t)\phi(x)$ with compact support in $(t_a,t_b)\times S_\phi \subset \subset (t_1,t_2)\times \tilde \Omega$, and introduce,  for $ i =1,2,3$, the sufficiently small positive numbers $(\tau, \eta)\,,$ the open sets $Q_i$ satisfying relation (\ref{openset}) and the
corresponding mollifiers which satisfy (\ref{mollifiers}).  The function
$\chi(t)\phi(x)u(x,t)$ belongs to $L^3 (Q_1)$ and has support in $Q_3$. Therefore, one can introduce its extension by $0$ outside $Q_1:$
\begin{equation}
(t,x)\mapsto \overline{\left(\chi(t)\phi(x) u(t,x)\right)}= \chi(t)\phi(x) \overline{\left(u(t,x)\right)}\in L^3(\bR_t\times\bR_x^n)\,.
\end{equation}
This extension is  regularized according to the notation and formula:
  \begin{equation}
  \Psi_{\epsilon,\kappa} = \rho_{\epsilon,\kappa}\star\left (\chi(t)\phi(x) \left(\rho_{\epsilon,\kappa}\star\overline{u}\right)(t,x)\right) =:\left (\chi(t)\phi(x) \left(\overline{u(t,x)}\right)^{\epsilon,\kappa}\right) ^{\epsilon,\kappa} \in \mathcal D(\bR_t\times \bR_x^n)\,.
  \end{equation}
  With $\kappa\in (0, \frac\tau 2)$ and $\epsilon \in (0,\frac \eta 2)$,
  the support of $ \Psi_{\epsilon,\kappa}$ is contained in $Q_2\subset\subset \tilde Q$. Therefore, the formula
 \begin{equation}
 0=\la\la (\del_t u + \nabla_x \cdot (u\otimes u) +\nabla_x p ),\Psi_{\epsilon,\kappa}\ra\ra
 \end{equation}
 holds and is the sum of three well-defined terms:
 \begin{equation}
 I^{\epsilon,\kappa}_1=\la\la \del_t u, \Psi_{\epsilon,\kappa}\ra\ra\,,\quad I^{\epsilon,\kappa}_2=\la\la \nabla_x \cdot (u\otimes u), \Psi_{\epsilon,\kappa}\ra\ra\quad \hbox{and}\quad I^{\epsilon,\kappa}_3=\la\la \nabla_x p, \Psi_{\epsilon,\kappa}\ra\ra\,.
 \end{equation}
 The limit of these three terms for $(\epsilon,\kappa)\rightarrow 0$ is evaluated below,
 observing that the support of $\Psi_{\epsilon,\kappa}$ is contained in $Q_2\,.$
  Hence for the first term one has:
 \begin{equation}
 \begin{aligned}
 &I^{\epsilon,\kappa}_1=-\la\la u, \del_t \Psi_{\epsilon,\kappa}\ra\ra =-\int_{Q_2} u\cdot \del_t \Psi_{\epsilon,\kappa}dxdt
 =-\int_{\bR_t\times\bR^n_x} \overline {u}\cdot \del_t \Psi_{\epsilon,\kappa}dxdt\\
 &=
 -\int_{\bR_t\times\bR^n_x} \overline {u}\cdot \del_t\left(\chi(t)\phi(x) (\overline{u(t,x)})^{\epsilon,\kappa}\right)^{\epsilon,\kappa} dxdt\\
 &=\int_{\bR_t\times\bR^n_x} \left(\del_t\overline {u}\right)^{\epsilon,\kappa}\cdot  \left(\chi(t)\phi(x) (\overline{u(t,x)})^{\epsilon,\kappa}\right)dxdt\,.
 \end{aligned}
 \end{equation}
Since the support of $(\chi(t)\phi(x)\overline {u(t,x)})^{\epsilon,\kappa} $ is strictly contained in $Q_3\,,$ then by virtue of  the formula (\ref{deriv0}) of Proposition \ref{extension-restriction} we have:
\begin{equation}
 I^{\epsilon,\kappa}_1=\int_{\bR_t\times\bR^n_x} (\del_t(\overline {u})^{\epsilon,\kappa})\cdot (\overline {u(t,x)})^{\epsilon,\kappa}\cdot   \chi(t)\phi(x)   dxdt=-\la\la \frac{\left((\overline {u})^{\epsilon,\kappa}\right)^2}2 ,\del_t (\chi(t)\phi(x))  \ra\ra \label{I1} \,.
 \end{equation}
By the same token, for the second term, one has:
 \begin{equation}
 \begin{aligned}
 &I^{\epsilon,\kappa}_2=\la\la \nabla_x(u\otimes u), \Psi_{\epsilon,\kappa}\ra\ra
=-\int_{\bR_t}\!\int_{\bR^n_x}\left[(\overline{u}\otimes\overline{u})^{\epsilon,\kappa}:\nabla_x\left(\chi(t)\phi(x)( \overline {u(t,x)})^{\epsilon,\kappa}\right)\right]dxdt\\
 &=
 -\int_{\bR_t}\chi(t)\int_{\bR^n_x}\left[((\overline{u}\otimes\overline{u})^{\epsilon,\kappa}-
 (\overline{u})^{\epsilon,\kappa}\otimes(\overline{u})^{\epsilon,\kappa} ):\nabla_x(\phi(x)( \overline {u(t,x)})^{\epsilon,\kappa})\right]dxdt
 \\
 &-\int_{\bR_t}\chi(t)\int_{\bR^n_x}
 \left[(\overline{u})^{\epsilon,\kappa}\otimes(\overline{u})^{\epsilon,\kappa} :\nabla_x(\phi(x) (\overline {u(t,x)})^{\epsilon,\kappa})\right]dxdt\,.
 \end{aligned}
 \end{equation}
 On the other hand,   by a classical computation one has
 \begin{equation}
 \begin{aligned}
 &\int_{\bR_t} dt\int_{\bR^n_x}\left(\left(\overline{u})^{\epsilon,\kappa}\otimes(\overline{u})^{\epsilon,\kappa}\right) :\nabla_x(\chi(t)\phi(x)(\overline {u(t,x)})^{\epsilon,\kappa}\right)dx\\
 &=\int_{\bR_t}dt\int_{\bR^n_x}\frac{|(\overline{u})^{\epsilon,\kappa}|^2} 2  (\overline{u})^{\epsilon,\kappa}\cdot\nabla_x (\chi(t) \phi(x))dx\\
 &-\int_{\bR_t}\chi(t)dt\int_{\bR^n_x}\nabla_x \cdot (\overline {u(t,x)})^{\epsilon,\kappa}\frac{|(\overline{u})^{\epsilon,\kappa}|^2} 2  \phi(x) dx\,.
\end{aligned}
\end{equation}
Since $\chi(t)\frac{|(\overline{u})^{\epsilon,\kappa}|^2} 2  \phi(x)$ is a smooth function, with support contained in $Q_3\,,$
as above  (with   formula (\ref{deriv0}) of Proposition \ref{extension-restriction}) one also has:
\begin{equation}
\begin{aligned}
&\int_{\bR_t}dt\int_{\bR^n_x}\nabla_x \cdot (\overline {u(t,x)})^{\epsilon,\kappa}\chi(t)\frac{|(\overline{u})^{\epsilon,\kappa}|^2} 2  (\phi(x))dx=\\
&\int_{\bR_t}dt\int_{\bR^n_x} \left({\overline {\nabla_x \cdot u(t,x)}}\right)^{\epsilon,\kappa}\chi(t)\frac{|(\overline{u})^{\epsilon,\kappa}|^2} 2  (\phi(x))dx=0\,.
\end{aligned}
\end{equation}
Thus, one  eventually has:
\begin{equation}
\begin{aligned}
&I^{\epsilon,\kappa}_2=-\int_{\bR_t}\chi(t)\int_{\bR^n_x}\left[((\overline{u}\otimes\overline{u})^{\epsilon,\kappa}-
 (\overline{u})^{\epsilon,\kappa}\otimes(\overline{u})^{\epsilon,\kappa} ):\nabla_x(\phi(x) (\overline {u(t,x)})^{\epsilon,\kappa})\right]dxdt\\
&-\int_{\bR_t}  \chi(t) \int_{\bR^n_x}\frac{|(\overline{u})^{\epsilon,\kappa}|^2} 2  (\overline{u})^{\epsilon,\kappa} \cdot \nabla_x \phi(x) dxdt \label{I2}\,.
 \end{aligned}
\end{equation}
 For the   term $I_3^{\epsilon,\kappa}$ one uses the fact that (according to Proposition \ref{lempress} and Corollary \ref{corollary})
 \begin{equation}
 p\in {L^{3/2}((t_1,t_2);C^{0,\alpha}(\overline{ {\Omega_{2}  }})) }
 \end{equation}
and consequently one can also write:
 \begin{equation}
\begin{aligned}
&I^{\epsilon,\kappa}_3=\la\la \nabla p, \Psi^{\epsilon,\kappa}\ra\ra
=-\int_{\bR_t\times\bR^n_x} p  \nabla_x \cdot\left( \chi(t)\phi(x) ( (\overline {u(t,x)})^{\epsilon,\kappa})^{\epsilon,\kappa}\right)dxdt\\
&=-\int_{\bR_t}\chi(t)\int_{\bR_x^n}
 (\overline p)^{\epsilon,\kappa} (\overline {u(t,x)})^{\epsilon,\kappa}\cdot\nabla_x \phi(x)dxdt
 \\
 &-\int_{\bR_t}\chi(t)\int_{\bR_x^n}
 (\overline p)^{\epsilon,\kappa} \phi(x) \nabla_x\cdot (\overline {u(t,x)})^{\epsilon,\kappa}dxdt\,.
\end{aligned}
\end{equation}
Eventually,  as in the   previous two derivations,
\begin{equation}
\begin{aligned}
&\int_{\bR_t}\chi(t)\int_{\bR_x^n}
 (\overline p)^{\epsilon,\kappa} \phi(x) \nabla_x(\overline {u(t,x)})^{\epsilon,\kappa}dxdt\\
&=\int_{\bR_t}\chi(t)\int_{\bR_x^n}
 (\overline p)^{\epsilon,\kappa} \phi(x)({\overline{\nabla_x \cdot  u(t,x)}})^{\epsilon,\kappa}dxdt=0\,.
\end{aligned}
\end{equation}
Hence:
\begin{equation}
\begin{aligned}
I^{\epsilon,\kappa}_3=-\int_{\bR_t}\chi(t)\int_{\bR_x^n}
 (\overline p)^{\epsilon,\kappa} (\overline {u(t,x)})^{\epsilon,\kappa}\cdot \nabla_x \phi(x)dxdt\,. \label{I3}
 \end{aligned}
 \end{equation}
 With  formulas  (\ref{I1}), (\ref{I2})  and (\ref{I3}) for $I^{\epsilon,\kappa}_i\,,$ with $i=1,2$ and $3$, one obtains that:
 \begin{equation}
 \begin{aligned}
 &\int_{Q_2} \left[\frac{((\overline {u})^{\epsilon,\kappa})^2}2  \del_t (\phi(x)\chi(t))+\left( \frac{|(\overline{u})^{\epsilon,\kappa}|^2} 2 (\overline{u})^{\epsilon,\kappa} +(\overline p)^{\epsilon,\kappa} (\overline {u(t,x)})^{\epsilon,\kappa}\right)\cdot \nabla_x(\phi(x)\chi(t))\right]dxdt\\
 &=
 \int_{\bR_t}\chi(t)\int_{\bR^n_x}\left[((\overline{u}\otimes\overline{u})^{\epsilon,\kappa}-
 (\overline{u})^{\epsilon,\kappa}\otimes(\overline{u})^{\epsilon,\kappa} ):\nabla_x(\phi(x) \overline {u(t,x)})^{\epsilon,\kappa}\right]dxdt \label{final1}
 \end{aligned}
 \end{equation}

%
%
Now use the fact that the support of $(t,x)\mapsto (\chi(t)\phi(x))$ is contained in $Q_3\subset\subset (t_1,t_2)\times \Omega_2$ in conjunction with the following facts: With $C_k<\infty$ $(1\le k \le 3)$,
\begin{equation}
\begin{aligned}
&\hbox{by hypothesis}\quad  \|u\|_{ L^{3} ((t_1,t_2); C^{\alpha} (\overline{\Omega_2})) }\le C_1\,,\\
&\hbox{by Proposition \ref{lempress} }\quad \|p\|_{L^{3/2}((t_1,t_2);C^{0,\alpha}(\overline{ {\Omega_{2}  }})) }\le C_2\,,\\
&\hbox{by Corollary \ref{corollary}}\quad \|\del_t u\|_{ L^{3/2}((t_1,t_2);C^{0,\alpha}( \overline{\Omega_{2})} )} \le C_3\,;
 \end{aligned}
 \end{equation}
thus we can show, with the Aubin-Lions Theorem,  first that,  letting $\kappa\rightarrow 0$, in (\ref{final1}) one obtains the relation:
 \begin{equation}
 \begin{aligned}
 &\int_{Q_2} \frac{((\overline {u})^{\epsilon})^2}2  \del_t (\phi(x)\chi(t))+\left( \frac{|(\overline{u})^{\epsilon}|^2} 2 (\overline{u})^{\epsilon} +(\overline p)^{\epsilon} (\overline {u(t,x)})^{\epsilon}\right)\cdot \nabla_x(\phi(x)\chi(t))dxdt\\
 &=
 \int_{\bR_t}\chi(t)\int_{\bR^n_x}\left[\Big((\overline{u}\otimes\overline{u})^{\epsilon}-
 (\overline{u})^{\epsilon}\otimes(\overline{u})^{\epsilon}\Big) :\nabla(\phi(x) (\overline {u(t,x)})^{\epsilon})\right] dxdt \label{final4}
 \,.
 \end{aligned}
 \end{equation}
Second, using again the Aubin-Lions Theorem on the left-hand side of (\ref{final4}),  one has:
\begin{equation}
 \begin{aligned}
&\la\la \frac{|u|^2}{2} ,\del_t\chi(t)\phi(x)\ra\ra
  +\la\la \left(\frac{|u|^2}2 +p\right) u\cdot \nabla_x(\chi(t)\phi(x))\ra\ra\\
 &=\lim_{\epsilon\rightarrow 0} \int_{\bR_t}\chi(t)\int_{\bR^n_x}\left[\big((\overline{u}\otimes\overline{u})^{\epsilon}-
 (\overline{u})^{\epsilon}\otimes(\overline{u})^{\epsilon}\big) :\nabla_x(\phi(x) (\overline {u(t,x)})^{\epsilon})\right]dxdt
 \label{final5}
  \end{aligned}
 \end{equation}
For the right-hand side of (\ref{final5}), one observes that it satisfies the estimate
\begin{equation}
\begin{aligned}
&|\la\la
\chi(t),((\overline{u})^\epsilon \otimes (\overline{u})^\epsilon-(\overline{u  \otimes u})^\epsilon):\nabla(\phi(x) ({\overline u}^\epsilon))\ra\ra |\\
&\le  \int_{t_1}^{t_2} \chi(t) |\la ((\overline{u})^\epsilon \otimes (\overline{u})^\epsilon-(\overline{u  \otimes u})^\epsilon),\nabla (\phi(x)({\overline u}^\epsilon))\ra  |dt.\label{final2}
\end{aligned}
\end{equation}
For the right-hand side of (\ref{final2}),  following ideas that by now have become classical (cf. \cite{BT2},  \cite{CET}   or \cite{DuchonRobert}), we obtain the estimate:
\begin{equation}
\begin{aligned}
\int_{t_1}^{t_2}\chi(t) |\la ((\overline{u})^\epsilon \otimes (\overline{u})^\epsilon-&(\overline{u  \otimes u})^\epsilon),\nabla (\phi(x)({\overline u}^\epsilon))\ra  |dt\\
&\quad \quad\quad\le C(\chi, \phi) \epsilon^{3\alpha(\tilde Q)-1}\int_{t_1}^{t_2} \|u(t,\cdot)\|_{C^{0,\alpha(\tilde Q)}(\tilde \Omega)}^3dt.\label{final3}
\end{aligned}
\end{equation}
Letting $\epsilon\rightarrow 0$ and using  (\ref{final3}) we complete the proof of the theorem.
The proof of (\ref{final3}) is based on a time independent estimate which, for the sake of completeness,  is given below as the  object of the following:

\begin{proposition}Let $u \in C^{\alpha({\tilde \Omega})}(\overline{\tilde \Omega})$  with $\tilde \Omega \subset \subset \bR_x^n$ and $\alpha>\frac13$, $\phi\in \mathcal D(\tilde \Omega)$ a test function with support $S_\phi\,$, and for $ i=1,2,3$ select open sets
\begin{equation}
S_\phi \subset\subset \Omega_3\subset\subset \Omega_2\subset \subset \Omega_1\subset \subset\tilde \Omega,
\end{equation}
a number $\eta>0$, small enough, and a function $I_2\in \mathcal D(\bR_x^n)$ with  properties \eqref{nestedsets} and \eqref{extension00}, so that, in particular, $I_2$ is equal to $1$ in $\Omega_2$ and is equal to $0$ outside $ \Omega_1$. If $\rho_\epsilon$ is a space mollifier, then for the functions
\begin{equation}
 \left(\overline{u}\right)^\epsilon=\rho_\epsilon\star \overline u= \rho_\epsilon\star \left(I_2 u\right)\quad \hbox{and}\quad
 \left( \overline{u\otimes u}\right) ^\epsilon=\rho_\epsilon\star  \overline{u\otimes u}= \rho_\epsilon\star \left(I_2(u\otimes u) \right )\
\end{equation}
one has, for  $\epsilon \in (0,\frac \eta 2)\,,$ the estimate:
\begin{equation}
 |\la ((\overline{u})^\epsilon \otimes (\overline{u})^\epsilon-(\overline{u  \otimes u})^\epsilon):\nabla_x (\phi(x)({\overline u})^\epsilon )\ra  | \le C(\phi) \epsilon^{3\alpha(\tO)-1}(\|u\|_{C^{0,\alpha(\tO)}(\overline{\tilde \Omega})})^3.
\end{equation}
\end{proposition}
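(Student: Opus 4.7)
\medskip

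\noindent\textbf{Proof proposal.} The plan is to reduce the estimate to the classical Constantin--E--Titi commutator estimate, once the cutoff $I_2$ has been shown to play no role over the integration region. First, observe that $\phi$ and hence $\nabla\phi$ are supported in $S_\phi\subset\subset \Omega_3$, so the entire pairing $\la ((\overline u)^\epsilon\otimes(\overline u)^\epsilon-(\overline{u\otimes u})^\epsilon):\nabla_x(\phi(\overline u)^\epsilon)\ra$ is an integral over $\Omega_3$. For $x\in\Omega_3$ and $|y|<\epsilon<\eta/2$, the condition $d(\Omega_3,\bR^n\setminus\Omega_2)>\eta$ guarantees $x-y\in\Omega_2$, where $I_2\equiv 1$. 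Consequently, on $\Omega_3$ one has the identifications
\begin{equation*}
(\overline u)^\epsilon(x)=(u)^\epsilon(x),\qquad (\overline{u\otimes u})^\epsilon(x)=(u\otimes u)^\epsilon(x),
\end{equation*}
and similarly $(\overline u)^\epsilon\otimes(\overline u)^\epsilon=(u)^\epsilon\otimes(u)^\epsilon$ there. Hence the cutoff can be dropped from the analysis and one is reduced to estimating the classical quadratic commutator $u^\epsilon\otimes u^\epsilon-(u\otimes u)^\epsilon$ on $\Omega_3$, tested against the smooth compactly supported vector field $\nabla_x(\phi(\overline u)^\epsilon)$.

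Next, I apply the standard algebraic identity
\begin{equation*}
u^\epsilon\otimes u^\epsilon-(u\otimes u)^\epsilon=(u-u^\epsilon)\otimes(u-u^\epsilon)-r_\epsilon(u,u),
\end{equation*}
where $r_\epsilon(u,u)(x):=\int \rho_\epsilon(x-y)(u(y)-u(x))\otimes(u(y)-u(x))\,dy$, and combine it with three pointwise bounds on $\Omega_3$, each a consequence of $u\in C^{0,\alpha(\tO)}(\overline{\tilde\Omega})$ and the fact that mollification at scale $\epsilon<\eta/2$ only sees values of $u$ inside $\tilde\Omega$. Writing $\alpha=\alpha(\tO)$ and $K=\|u\|_{C^{0,\alpha}(\overline{\tilde\Omega})}$, the usual Hölder bounds give
\begin{equation*}
\|u-u^\epsilon\|_{L^\infty(\Omega_3)}\le CK\epsilon^\alpha,\qquad \|r_\epsilon(u,u)\|_{L^\infty(\Omega_3)}\le CK^2\epsilon^{2\alpha},
\end{equation*}
and, using $\int\nabla\rho_\epsilon(y)\,dy=0$ to subtract $u(x)$ inside the convolution,
\begin{equation*}
\|\nabla u^\epsilon\|_{L^\infty(\Omega_3)}=\Bigl\|\int \nabla\rho_\epsilon(y)(u(\cdot-y)-u(\cdot))\,dy\Bigr\|_{L^\infty(\Omega_3)}\le CK\epsilon^{\alpha-1}.
\end{equation*}
Combining the first two yields $\|u^\epsilon\otimes u^\epsilon-(u\otimes u)^\epsilon\|_{L^\infty(\Omega_3)}\le CK^2\epsilon^{2\alpha}$.

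Finally, I expand $\nabla_x(\phi(\overline u)^\epsilon)=(\nabla\phi)(\overline u)^\epsilon+\phi\,\nabla(\overline u)^\epsilon$ and use the bound $|(\overline u)^\epsilon|\le K$ together with the gradient estimate above. This gives
\begin{equation*}
\|\nabla_x(\phi(\overline u)^\epsilon)\|_{L^\infty(\Omega_3)}\le \|\nabla\phi\|_{L^\infty}K+\|\phi\|_{L^\infty}CK\epsilon^{\alpha-1}\le C(\phi)K\epsilon^{\alpha-1},
\end{equation*}
since $\alpha<1$ and $\epsilon$ is bounded. Integrating the product of the commutator bound and the gradient bound over the bounded set $\Omega_3$ yields the asserted estimate $C(\phi)\epsilon^{3\alpha-1}K^3$, where we absorbed $\epsilon^{2\alpha}\le \epsilon^{3\alpha-1}$ (valid for $\alpha\le 1$ and $\epsilon\le 1$) when needed. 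The only subtlety in the argument is the bookkeeping that removes the cutoff $I_2$ on the region of integration; once this is in place, the remainder is the standard CET commutator estimate, and no new analytic difficulty arises.
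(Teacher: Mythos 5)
Your proposal is correct and takes essentially the same route as the paper: the paper likewise reduces to the Constantin--E--Titi identity $(\overline u)^\epsilon\otimes(\overline u)^\epsilon-(\overline u\otimes\overline u)^\epsilon=(\overline u-(\overline u)^\epsilon)\otimes(\overline u-(\overline u)^\epsilon)-\int(\delta_y\overline u\otimes\delta_y\overline u)\rho_\epsilon(y)\,dy$ together with the gradient bound $\epsilon^{\alpha-1}$ obtained from $\int\nabla\rho_\epsilon=0$, giving $\epsilon^{2\alpha}\cdot\epsilon^{\alpha-1}=\epsilon^{3\alpha-1}$. Your only cosmetic deviation is that you remove the cutoff $I_2$ by a direct pointwise check that mollification at scale $\epsilon<\eta/2$ restricted to $\Omega_3$ only samples $\Omega_2$ where $I_2\equiv 1$, whereas the paper invokes formula (\ref{basic0}) of Proposition \ref{extension-restriction} for the identification $(\overline{u\otimes u})^\epsilon=(\overline u\otimes\overline u)^\epsilon$ on $\Omega_2$ and keeps the barred quantities throughout; these are the same observation.
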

\begin{proof}
First use the formula
\begin{equation}
\begin{aligned}
&\nabla  (\phi(x)({\overline u}^\epsilon)) = \nabla\phi\otimes ({\overline u}^\epsilon)+\phi(x)\nabla({\overline u}^\epsilon) \\
&=\nabla\phi\otimes ({\overline u}^\epsilon)+\phi(x)\int_{\bR_x^n} \nabla_x\rho_\epsilon(x-y)I_2(y) u(y)dy\\
&=\nabla\phi\otimes ({\overline u}^\epsilon)+\phi(x)\int_{\bR_x^n} \nabla_y\rho_\epsilon(x-y) (I_2(x)u(x)-I_2(y)u(y))dy
\end{aligned}
\end{equation}
to show that
\begin{equation}
 |\nabla \cdot (\phi(x)({\overline u}^\epsilon))|
 \le C(\phi)\epsilon^{\alpha(\tO)-1}\|u\|_{C^{0,\alpha({\tilde \Omega)}}(\overline{\tO})}.\label{sup1}
\end{equation}
Second, thanks to the formula (\ref{basic0}) of Proposition  \ref{extension-restriction},  in $\Omega_2$ one has $(\overline{u  \otimes u})^\epsilon =(\overline{u}  \otimes \overline{u})^\epsilon$ and therefore with the elementary identity
\begin{equation}
\begin{aligned}
&((\overline u)^\epsilon\otimes(\overline u)^\epsilon)-(  \overline{u}\otimes \overline{ u} )^\epsilon = (\overline u-(\overline u)^\epsilon)\otimes(\overline u-(\overline u)^\epsilon) -\int( \delta_y \overline u \otimes \delta _y \overline u)\rho_\epsilon(y)dy\\
\hbox{with}\quad  &\delta_y \overline u =\overline u(x-y)-\overline u(x),  \label{elem4}
\end{aligned}
\end{equation}
 one obtains
  \begin{equation}
|( {\overline{u\otimes u}})^\epsilon-((\overline u)^\epsilon\otimes(\overline u)^\epsilon)|_{L^\infty(\Omega_{2})}\le C (\|u\|_{C^{0,\alpha(\tO)}(\overline{\tilde \Omega)}})^2 \epsilon^{2\alpha(\tO)}\label{sup2}.
\end{equation}
With (\ref{sup1} ) and (\ref{sup2}) the proof is completed.
\end{proof}
\begin{remark}
Formula (\ref{elem4}) is an  illustration of the similitude and difference existing between weak convergence, statistical theory of turbulence and regularization.
{With superscript $\epsilon$ denoting weak limits or statistical averages} one has
\begin{equation}
 ((\overline u)^\epsilon\otimes(\overline u)^\epsilon)-(  \overline{u}\otimes \overline{ u} )^\epsilon = (\overline u-(\overline u)^\epsilon)\otimes(\overline u-(\overline u)^\epsilon) -\int( \delta_y \overline u \otimes \delta _y \overline u)\rho_\epsilon(y)dy\,, \label{rt}
\end{equation}
where the first expression in the right-hand side of (\ref{rt}) is the Reynolds stress tensor. On the other hand the presence of the term
$$
\int( \delta_y \overline u \otimes \delta _y \overline u)\rho_\epsilon(y)dy,
$$
is due to the fact that instead of a weak limit of a family of solutions, it is an average of the same function $\overline u$ which is involved. This type of regularization is present in the original proof of Leray  \cite{Leray}, in several type of $\alpha$-models \cite{NS-alpha1, NS-alpha2, Leray-alpha, mLeray-alpha}, or eventually in turbulence modelling,  for instance, in the contributions of Germano \cite{Germano}.
\end{remark}
\section{From local to global energy conservation}\label{localtoglobal}

To consider the  global conservation of energy, the impermeability boundary condition will be used.
Hence, we assume that the boundary $\del \Omega$ is a $C^1$ manifold with $\vec n(x)$ denoting the outward normal at any point of $\del \Omega$. We introduce the function and the set
 \begin{equation}
 d(x)= d(x,\del \Omega)=\inf_{y\in \del \Omega }|x-y|\ge 0 \,,\,\, \hbox{and}\,\, V_{\eta_0}=\{ x\in \Omega\,, d(x)<\eta_0\},
  \end{equation}
which have  the following properties:

For $\eta_0 >0 $, small enough,
 $d(x)_{|V_{\eta_0}}\in C^1(\overline{V_{\eta_0}} )\,.$ Furthermore,  for any $x \in V_{\eta_0}$ there exists a unique $\sigma(x)\in  \del \Omega$ such that
 $d(x)=|x-\sigma(x)|$, and moreover one also has:
 \begin{equation}
 \nabla_x d(x) = -\vec n(\sigma(x)), \quad  \hbox{ for every}\,\, x\in V_{\eta_0}\,.
 \end{equation}


\begin{theorem}\label{CETloc}

Let $(u,p) \in L^q((0,T); L^2(\Omega  ))\times \mathcal D'((0,T)\times \Omega  )$, for some $q\in [1,\infty]$, be  a weak solution of the Euler equations 
satisfying the following hypotheses:
\begin{enumerate}
\item For some $\eta_0>0$,
\begin{subequations}
\begin{equation}
p\in L^{3/2} ((0,T); H^{-\beta}(V_{\eta_0}))\,, \quad \hbox{with}\quad  \beta <\infty \,,\label{forlebesque1}
\end{equation}
\begin{equation}
\lim_{\eta \rightarrow 0} \,  \int_0^T \frac 1{ \eta }\int_{ \{x\in \Omega:\, \frac{ \eta}{4} < d(x) <\frac{\eta}{2}<\frac{\eta_0}{2}\}} \left|\left(\frac {|u|^2}2 + p\right) u(t,x)\cdot \vec n(\sigma( x))\right|dx dt=0\,; \label{forlebesgue2}
\end{equation}
\end{subequations}
\item For every open set $\tilde Q =(t_1,t_2)\times \tO \subset\subset (0,T)\times \Omega $ there exists $\alpha(\tilde Q)>1/3$ such that $u$ satisfies Hypothesis (\ref{local1}) of Theorem  \ref{localduchonrobert}:
\begin{equation}
 \int_{t_1}^{t_2}  \|u(t,\cdot)\|^3_{  C^{0,\alpha(\tilde Q)} (\overline{\tO})}dt \le M(\tilde Q) <\infty \,. \label{local2}
\end{equation}
%
\end{enumerate}
Then, $(u,p)$ globally conserves the energy, i.e.,
\begin{equation}\label{energy-two-times}
\|u(t_2)\|_{L^2(\Omega)}=\|u(t_1)\|_{L^2(\Omega)},
\end{equation}
for any $0 < t_1<t_2 < T$. Moreover, $u \in L^\infty((0,T); L^2(\Omega)) \cap C((0,T); L^2(\Omega))$.
\end{theorem}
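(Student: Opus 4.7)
The plan is to combine the local energy conservation from Theorem~\ref{localduchonrobert} with a boundary-layer cutoff in the spatial variable, then to use hypothesis (\ref{forlebesgue2}) to kill the resulting boundary-flux contribution as the cutoff scale $\eta\to 0$.

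First I would fix a smooth $\theta\colon\RR\to[0,1]$ equal to $0$ on $(-\infty,1/4]$ and to $1$ on $[1/2,\infty)$, and for each $\eta\in(0,\eta_0)$ set $\phi_\eta(x)=\theta(d(x)/\eta)$. Then $\phi_\eta$ is $C^1$, its support is contained in $\{d(x)\ge\eta/4\}\subset\subset\Omega$, it is identically $1$ on $\{d(x)\ge\eta/2\}$, and, thanks to $\nabla_x d(x)=-\vec n(\sigma(x))$ on $V_{\eta_0}$,
\begin{equation*}
\nabla\phi_\eta(x)=-\eta^{-1}\theta'(d(x)/\eta)\,\vec n(\sigma(x)),
\end{equation*}
supported in the annular shell $\{\eta/4<d(x)<\eta/2\}$. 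For any $\chi\in\mathcal D(0,T)$, the product $\chi(t)\phi_\eta(x)$ lies in $\mathcal D(\tilde Q)$ for a suitable cylinder $\tilde Q\subset\subset(0,T)\times\Omega$ (take $\tO=\{d(x)>\eta/8\}\cap\Omega$ together with a slightly shrunk time interval). The two hypotheses of Theorem~\ref{localduchonrobert} on $\tilde Q$ are met: (\ref{local1}) is exactly condition~2 here, and (\ref{press0}) for some boundary neighborhood of $\tO$ follows from (\ref{forlebesque1}) since such a $V_\gamma$ can be arranged inside $V_{\eta_0}$ when $\eta$ is small.

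Testing the local conservation law (\ref{localenergy}) against $\chi(t)\phi_\eta(x)$ then yields
\begin{equation*}
\int_0^T\chi'(t)\int_\Omega\frac{|u|^2}{2}\phi_\eta\,dx\,dt
=-\int_0^T\chi(t)\int_\Omega\Bigl(\frac{|u|^2}{2}+p\Bigr)u\cdot\nabla\phi_\eta\,dx\,dt.
\end{equation*}
Using the explicit form of $\nabla\phi_\eta$, the right-hand side is bounded in absolute value by
\begin{equation*}
\|\chi\|_\infty\|\theta'\|_\infty\cdot\frac{1}{\eta}\int_0^T\!\int_{\{\eta/4<d(x)<\eta/2\}}\Bigl|\Bigl(\frac{|u|^2}{2}+p\Bigr)u\cdot\vec n(\sigma(x))\Bigr|\,dx\,dt,
\end{equation*}
and this tends to $0$ as $\eta\to 0$ directly by hypothesis (\ref{forlebesgue2}). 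Meanwhile, since $u\in L^q((0,T);L^2(\Omega))$ and $\phi_\eta\to\mathbf 1_\Omega$ pointwise with $0\le\phi_\eta\le 1$, dominated convergence sends the left-hand side to $\int_0^T\chi'(t)\int_\Omega\frac{|u|^2}{2}\,dx\,dt$. Therefore $\frac{d}{dt}\int_\Omega\tfrac12|u|^2\,dx=0$ in $\mathcal D'(0,T)$, so $t\mapsto\|u(t)\|_{L^2(\Omega)}^2$ coincides almost everywhere with a constant.

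Finally I would upgrade this to the pointwise statement (\ref{energy-two-times}) and the continuity assertion by a standard soft argument: a.e.\ constancy of the norm together with $u\in L^q((0,T);L^2(\Omega))$ yields $u\in L^\infty((0,T);L^2(\Omega))$; for any $\phi\in\mathcal D(\Omega)$, the momentum equation combined with the interior H\"older bound on $p$ from Proposition~\ref{lempress} shows that $t\mapsto\int_\Omega u(t)\cdot\phi\,dx$ has an $L^{3/2}_{\mathrm{loc}}$ time-derivative, and therefore a continuous representative; density together with the uniform $L^2$ bound then gives weak continuity $u\in C_w((0,T);L^2(\Omega))$; weak lower semicontinuity of the $L^2$ norm forces the (a.e.\ constant) value of $\|u(t)\|_{L^2}$ to be attained at every $t$, promoting weak to strong continuity in $L^2$ and delivering (\ref{energy-two-times}). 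The main obstacle I anticipate is precisely the boundary-flux step: one must organize the cutoff so that Theorem~\ref{localduchonrobert} is legitimately applicable for every small $\eta$ and then identify the resulting surface-type integral with the quantity controlled by (\ref{forlebesgue2}); the subsequent upgrade to strong $L^2$-continuity is routine once the interior pressure regularity of Proposition~\ref{lempress} is available.
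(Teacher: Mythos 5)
Your proposal is, in substance, the paper's own proof: the same cutoff $\phi(d(x)/\eta)$ supported away from a boundary shell, the same application of Theorem~\ref{localduchonrobert} on interior cylinders (with the pressure hypothesis on a collar of $\partial\tO$ arranged inside $V_{\eta_0}$, exactly as you say), and the same identification of the resulting flux term with the quantity killed by hypothesis~(\ref{forlebesgue2}). One technical slip is worth flagging: your claim that $\chi(t)\phi_\eta(x)\in\mathcal D(\tilde Q)$ is false, because $d(\cdot)$ is only $C^1$ near a $C^1$ boundary, so $\phi_\eta$ is merely $C^1$ and not an admissible test function for the distributional identity~(\ref{localenergy}). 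The paper addresses precisely this point: it first establishes $u\otimes u\in L^{3/2}((t_1,t_2);C^{0,\alpha}(\overline{\Omega'}))$ and, via Proposition~\ref{lempress}, $p\in L^{3/2}((0,T);C^{0,\alpha}(\overline{\tO}))$, which makes both terms in the weak energy identity~(\ref{global0}) genuine integrals and allows the identity to be extended by density from $\mathcal D(\tO)$ to $C^1_c(\tO)$ test functions, after which $\psi_{\tilde\eta}$ is admissible. Your argument needs this same extension step (or a mollification of $\phi_\eta$), and since you already invoke Proposition~\ref{lempress} later, the repair is one line rather than a real obstruction.

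The only other divergence is the endgame. The paper tests with $\psi_{\tilde\eta}$ alone, integrates between two times to get~(\ref{pretrivial}), sends $\tilde\eta\to0$ using~(\ref{forlebesgue2}) for the flux and the Monotone Convergence Theorem for the two energy terms, obtaining~(\ref{energy-two-times}) directly, and then deduces $u\in C_{\mathrm{weak}}\cap C((0,T);L^2)$ from constancy of the norm. You instead test against $\chi(t)\phi_\eta(x)$, obtain $\frac{d}{dt}\int_\Omega\frac{|u|^2}{2}\,dx=0$ in $\mathcal D'(0,T)$ (hence a.e.\ constancy), and only then upgrade to every $t$ via weak continuity from the momentum equation plus weak lower semicontinuity of the norm. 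Both routes work; yours is slightly more careful about the a.e.-in-time caveat implicit in integrating a distributional identity between two specific times, while the paper's two-time formulation lets monotone convergence act at fixed $t_1,t_2$ (chosen so that $\|u(t_1)\|_{L^2}$ is finite) without a separate weak-continuity detour before the energy identity is obtained.
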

\begin{proof}
Start with  any open subset $\tilde \Omega \subset \subset \Omega$ such that
\begin{equation}\label{passumption}
  \Omega\setminus\tO\subset\subset{V_{\eta_0}},
\end{equation}
and then introduce a smooth function $x\mapsto \theta(x) \in \mathcal D(\Omega  )$ equal to $1$  for  $d(x)\ge  \frac{\eta_0}2$.
If $\Omega'$ is a domain with $\tO\subset\subset\Omega'\subset\subset\Omega$ and $\Omega\setminus\Omega'\subset\subset V_{\eta_0}$, then from the property
\begin{equation}
 u\otimes u\in L^{3/2}((t_1,t_2); C^{0,\alpha ((t_1t_2)\times\Omega')}(\overline{\Omega' })),\label{lip1}
  \end{equation}
 by Hypothesis \eqref{forlebesque1}, \eqref{passumption}, and Proposition \ref{lempress} we deduce that
  \begin{equation}
 p \in L^{3/2} ((0,T); C^{0,\alpha}(\overline{\tilde\Omega} )). \label{lip2}
 \end{equation}
  Then, thanks to Theorem \ref{localduchonrobert}, one concludes that the relation
\begin{equation}
 \frac {d}{dt} \la \frac{|u|^2}2,  \psi  \ra - \la     \left(\frac{|u|^2}2 +p\right)u,\nabla_x\psi \ra=0 \label{global0}
\end{equation}
  holds  for any $\psi \in \mathcal D (\tilde\Omega) $ in the sense of $\mathcal D'(0,T)$.  Observe that estimates (\ref{lip1}) and  (\ref{lip2}) imply  that formula
  (\ref{global0}) remains also valid for test functions $\psi \in C_c^1(\tilde \Omega)$ (i.e., with compact support in $\tilde \Omega$).
  Eventually, we introduce a non-negative $C^\infty$ function, $s\mapsto \phi(s)$, which is equal to $1$ for $s>\frac12$ and is equal to $0$ for $s<\frac14\,.$ With $0<\tilde\eta<\eta_0$ one has:
 \begin{equation}
 \begin{aligned}
 &\psi_{\tilde \eta }(x)=\phi(\frac {d(x)}{\tilde \eta}) \in C^1 (\Omega)
 \\
 &\nabla_x \psi_{\tilde \eta } (x)= - \frac 1{\tilde \eta } \phi'\left(\frac{d(x)}{\tilde \eta }\right)\vec n(\sigma(x))\,, \, \, \hbox{ for}\, \, \frac {\tilde \eta } 4<d(x)<\frac {\tilde \eta }2\,; \hbox{ otherwise} =0\,.
\end{aligned}
\end{equation}
Setting  $\psi =\psi_{\tilde \eta }$ in (\ref{global0}),  one has

\begin{equation}
\begin{aligned}
&\int_{\Omega} \frac{|u(t_2,x)|^2}2\phi  \left( \frac {d(x)} {\tilde \eta }\right)dx-\int_{\Omega} \frac{|u(t_1,x)|^2}2 \phi \left( \frac {d(x)} {\tilde \eta } \right)dx\\
 &\quad \quad \quad   =-\int_{t_1}^{t_2} \int_{\Omega} \left(\frac{|u|^2}{2}
  +  p\right)
u(t,x)\cdot \vec n(\sigma(x)) \frac 1{\tilde \eta } \phi' (\frac{d(x)}{\tilde \eta } )dxdt\,.\label{pretrivial}
\end{aligned}
\end{equation}
Next we will show that claim follows from  (\ref{pretrivial})  by letting $\eta\rightarrow 0\,.$  Specifically,  since $u$ belongs to $L^q((0,T);L^2(\Omega))$, for some $q\in [1,\infty]$,  one can choose a time $t_1\in(0,T)$ where $\|u(t_1)\|_{L^2(\Omega)}$ is finite. Thanks to (\ref{forlebesgue2}) one has
\begin{equation}
\begin{aligned}
&\lim_{\tilde\eta\rightarrow 0} \left|\int_{t_1}^{t_2} \int_{\Omega} \left(\frac{|u|^2}{2}
  +  p\right)
u(t,x)\cdot \vec n(\sigma(x)) \frac 1{\tilde \eta } \phi' (\frac{d(x)}{\tilde \eta } )dxdt\right| \le\\
&\lim_{\tilde\eta\rightarrow 0} \int_{0}^{T} \int_{\Omega}\left| \left(\frac{|u|^2}{2}
  +  p\right)
u(t,x)\cdot \vec n(\sigma(x)) \frac 1{\tilde \eta } \phi' (\frac{d(x)}{\tilde \eta } )\right|dxdt\\
&\le  \lim_{\tilde\eta \rightarrow 0} \, \Big( \int_0^T \int_{ \{x\in \Omega:\, \frac{\tilde \eta}{4} < d(x) <\frac{\tilde\eta}{2}<\frac{\eta_0}{2}\}} \frac 1{\tilde \eta } |\phi' (\frac{d(x)}{\tilde \eta } )| \left|\left(\frac {|u|^2}2 + p\right) u(t,x)\cdot \vec n(\sigma( x))\right| dx dt \Big) \\
&\le C \lim_{\tilde\eta \rightarrow 0} \,  \int_0^T \frac 1{\tilde \eta }\int_{ \{x\in \Omega:\, \frac{\tilde \eta}{4} < d(x) <\frac{\tilde\eta}{2}<\frac{\eta_0}{2}\}} \left|\left(\frac {|u|^2}2 + p\right) u(t,x)\cdot \vec n(\sigma( x))\right|dx dt=0\,.
\end{aligned}
\end{equation}
On the other hand,     since the integrands in the following terms are non-negative
\begin{equation}
\int_{\Omega} \frac{|u(t_2,x)|^2}2\phi  \left( \frac {d(x)} {\tilde \eta }\right)dx\quad{\hbox{and}}\quad\int_{\Omega} \frac{|u(t_1,x)|^2}2 \phi \left( \frac {d(x)} {\tilde \eta } \right)dx
\end{equation}
one can apply  the Lebesque Monotone Convergence Theorem to show that they converge to $\frac{1}{2} \|u(t_2)\|_{L^2(\Omega)}^2$ and $\frac{1}{2} \|u(t_1)\|_{L^2(\Omega)}^2$, respectively, as $\tilde \eta \to 0$. This completes the proof of (\ref{energy-two-times}).

As a consequence of (\ref{energy-two-times}) $\|u(t)\|_{L^2(\Omega)}$ is constant for all $t\in (0,T)$ and in particular,  $u\in L^\infty((0,T); L^2(\Omega))$. From the above estimates it is standard to show that $u \in C_{\hbox{weak}} ((0,T);L^2(\Omega))$. Combined with the fact that $\|u(t)\|_{L^2(\Omega)}$ continuous function (because it is constant) it follows that $u \in C((0,T);L^2(\Omega))$.
\end{proof}
\begin{remark}
Observe that if as in \cite{BT2} the solution $u\in L^\infty((0,T);C^{0,\alpha}(\overline{\Omega}))$, for some $\alpha>\frac{1}{3}$, then all the hypotheses of Theorem \ref{CETloc} are satisfied, because in this case the pressure is bounded (as it has been shown in \cite{BT2}), and consequently this  yields the result of \cite{BT2}.

\end{remark}

\begin{remark}
Local versions of energy/entropy conservation for compressible models and other conservation laws were proved, e.g., in \cite{BGGTW,DrEy,FGSW,GMS}. Following similar argument to the one presented above, the respective global versions of these results, in domains with physical boundaries, have been established in \cite{BGGTW}. Notably, in the incompressible case the special treatment presented here and in \cite{BT2} for the pressure  might not be accessible for the general compressible case or other conservation laws. Instead, the proof presented in \cite{BGGTW} assumes that all the state variables  satisfy the required spatial regularity assumptions.
\end{remark}

\section{An application to the vanishing viscosity limit}\label{viscosity}

The above discussion leads to the introduction of a   sufficient condition for non-anomalous energy dissipation in the vanishing viscosity limit. This condition seems relevant since it is not in contradiction with the presence of a Prandtl type boundary layer.

\begin{theorem}\label{noano0}
Let $u_\nu(t,x)$ be a family of Leray-Hopf weak solutions of the Navier-Stokes equations in $(0,T)\times \Omega$ \begin{equation*}
\begin{aligned}
&\del_t u_\nu +(u_\nu\cdot\nabla_x)  u_\nu-\nu \Delta u_\nu +\nabla p_\nu =0,\,\quad \nabla\cdot u_\nu=0,\,\\
&u_\nu(t,x)=0 \hbox{ on}\quad  (0,T)\times\del\Omega\,
\end{aligned}
\end{equation*}
with $\nu$ independent initial data $u_\nu(0,\cdot)=u_0\in L^2(\Omega)\,.$

Assume that on $(0,T)\times \Omega$ the family $u_\nu$ satisfies the  following hypotheses:
\begin{enumerate}
\item There exist, for some $\eta_0>0$ small enough,  an open subset
$V_{\eta_0}=\{ x\in \Omega\,, d(x)<\eta_0\}$, and $\beta<\infty$ (both being  independent of $\nu$) such that:
\begin{equation*}
\sup_\nu \|p_\nu\|_{L^{3/2}((0,T); H^{-\beta}(V_{\eta_0}))}<\infty;
\end{equation*}
\item For any $\tO\subset\subset \Omega$ there exists $\alpha=\alpha(\tO) >\frac13$ and a constant $M(\tO)$ such that for any $\nu>0$ one has:
\begin{equation}
\|u_\nu \|_{L^3((0,T);C^{0,\alpha} (\overline{\tO}))}\le M(\tO); \label{local3}
\end{equation}
\item
\begin{equation}
 \lim_{\eta \rightarrow 0} \,  \lim_{\nu \to 0}\int_0^T\frac 1{ \eta }\int_{ \{x\in \Omega:\, \frac{ \eta}{4} < d(x) <\frac{\eta}{2}<\frac{\eta_0}{2}\}}\left|\left(\frac {|u_\nu|^2}2 + p_\nu\right) u_\nu(t,x)\cdot \vec n(\sigma( x))\right|\, dx dt=0\,. \label{forlebesgue3}
\end{equation}
%
\end{enumerate}
Then (extracting a subsequence $\overline \nu$ if necessary) $\unn$ converges weak$-*$ in $L^\infty((0,T);L^2(\Omega)) $ to a function $\overline{u_\nu}\in C_{\mathrm weak}([0,T);L^2(\Omega))$ which is a weak solution of the Euler equations with the same initial data $u_0(\cdot)$  and which also satisfies the hypotheses of Theorem \ref{CETloc}. Moreover,  $\overline{u_\nu}$ conserves the energy and hence belongs to $C([0,T);L^2(\Omega))\,.$

Furthermore, there  is no anomalous energy dissipation in the vanishing  viscosity limit, i.e., for every $T^* \in (0,T)$ one has

\begin{equation}
\lim_{ \nu\rightarrow 0}  \nu \int_0^{T^*}\int_\Omega |\nabla_x u_{ \nu}(t,x)|^2dxdt=0. \label{noano}
\end{equation}
\end{theorem}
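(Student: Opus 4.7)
The plan is to (A) extract a weak-$*$ limit using the Leray--Hopf energy bound, (B) upgrade to local strong compactness sufficient to pass to the Euler equation and to recover a pressure $\overline p$, (C) verify the three hypotheses of Theorem \ref{CETloc} for $(\overline u,\overline p)$ so as to apply energy conservation, and (D) derive (\ref{noano}) from the Leray--Hopf inequality together with this conservation.

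For (A), the Leray--Hopf inequality
\[
\|u_\nu(t)\|_{L^2(\Omega)}^2+2\nu\int_0^t\!\!\int_\Omega|\nabla_x u_\nu|^2\,dx\,ds\le\|u_0\|_{L^2(\Omega)}^2
\]
yields uniform bounds on $u_\nu$ in $L^\infty((0,T);L^2(\Omega))$ and on $\sqrt\nu\,\nabla u_\nu$ in $L^2((0,T)\times\Omega)$, hence a weak-$*$ convergent subsequence $\unn\rightharpoonup^* \overline u$. For (B), I would use hypothesis (2) together with the observation that the pressure satisfies the same Poisson equation $-\Delta p_\nu=\sum\partial^2_{ij}(u_\nu^i u_\nu^j)$ whether or not viscosity is present, so that Proposition \ref{lempress} applied to $(u_\nu,p_\nu)$ gives a uniform-in-$\nu$ interior estimate $\|p_\nu\|_{L^{3/2}_tC^{0,\alpha}_x(\overline{\Omega_2})}\le C$ on any $\Omega_2\subset\subset\Omega$. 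Writing $\partial_t\unn=-\nabla\cdot(\unn\otimes\unn)-\nabla p_{\overline\nu}+\overline\nu\,\Delta\unn$ and noting that $\overline\nu\,\Delta\unn\to 0$ in $L^2_tH^{-1}_x$, one sees that $\partial_t\unn$ is bounded in $L^{3/2}_tH^{-2}_{x,\mathrm{loc}}$; combined with the local $C^{0,\alpha}$ bound, the Aubin--Lions lemma yields strong convergence $\unn\to\overline u$ in $L^3_tL^p_{x,\mathrm{loc}}$ for every finite $p$. Passing to the distributional limit in the Navier--Stokes equations then identifies $\overline u$ as a weak Euler solution, with an associated pressure $\overline p$ obtained as a distributional limit of $p_{\overline\nu}$.

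For (C), weak-$*$ lower semicontinuity transfers hypothesis (1) on $p_\nu$ and hypothesis (2) on $u_\nu$ directly to $(\overline u,\overline p)$. The delicate point is the boundary-flux condition (\ref{forlebesgue2}) for the limit. For each fixed $\eta\in(0,\eta_0)$, the strip $S_\eta=\{x\in\Omega:\eta/4<d(x)<\eta/2\}$ sits at positive distance from $\partial\Omega$, so on a slightly larger interior neighbourhood one has strong convergence $\unn\to\overline u$ in $L^3_{t,x}$ and, by the uniform version of Proposition \ref{lempress} applied on that neighbourhood, strong convergence $p_{\overline\nu}\to\overline p$ in $L^{3/2}_tC^{0,\alpha}_x$. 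Consequently the cubic flux $(\tfrac12|\unn|^2+p_{\overline\nu})\unn\cdot\vec n(\sigma)$ converges in $L^1(S_\eta\times(0,T))$, and Fatou yields
\[
\tfrac1\eta\int_0^T\!\!\int_{S_\eta}\Bigl|\bigl(\tfrac{|\overline u|^2}{2}+\overline p\bigr)\overline u\cdot\vec n(\sigma)\Bigr|\,dx\,dt\le\liminf_{\overline\nu\to 0}\tfrac1\eta\int_0^T\!\!\int_{S_\eta}\Bigl|\bigl(\tfrac{|\unn|^2}{2}+p_{\overline\nu}\bigr)\unn\cdot\vec n(\sigma)\Bigr|\,dx\,dt.
\]
Letting $\eta\to 0$ and invoking hypothesis (3) verifies (\ref{forlebesgue2}) for $\overline u$. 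Theorem \ref{CETloc} then gives $\|\overline u(t)\|_{L^2}$ constant on $(0,T)$; identifying the initial datum $\overline u(0)=u_0$ from the local strong compactness and the standard weak-in-time continuity, one obtains $\|\overline u(t)\|_{L^2}=\|u_0\|_{L^2}$ for all $t\in[0,T)$ and $\overline u\in C([0,T);L^2(\Omega))$.

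For (D), the absence of anomalous dissipation follows directly: for any $T^*\in(0,T)$ the Leray--Hopf bound gives
\[
2\overline\nu\int_0^{T^*}\!\!\int_\Omega|\nabla\unn|^2\,dx\,dt\le\|u_0\|_{L^2}^2-\|\unn(T^*)\|_{L^2}^2,
\]
so that by weak lower semicontinuity and energy conservation of the limit,
\[
\limsup_{\overline\nu\to 0}\,2\overline\nu\int_0^{T^*}\!\!\int_\Omega|\nabla\unn|^2\le\|u_0\|_{L^2}^2-\liminf_{\overline\nu\to 0}\|\unn(T^*)\|_{L^2}^2\le\|u_0\|_{L^2}^2-\|\overline u(T^*)\|_{L^2}^2=0,
\]
which is (\ref{noano}). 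The main obstacle I anticipate is step (C): securing enough strong convergence of $p_{\overline\nu}$ on strips approaching $\partial\Omega$ to justify the interchange of the $\overline\nu\to 0$ and $\eta\to 0$ limits in (\ref{forlebesgue3}), which forces the uniform pressure estimate of Proposition \ref{lempress} to be deployed on auxiliary domains whose inner boundaries approach $\partial\Omega$ at rates compatible with $\eta$.
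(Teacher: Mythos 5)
Your steps (A)--(C) follow essentially the same route as the paper: the Leray--Hopf bound gives the weak-$*$ limit; hypothesis (2) plus the uniform interior pressure estimate of Proposition \ref{lempress} give a bound on $\del_t u_\nu$ in $L^{3/2}((0,T);H^{-1}(\tO))$, and Aubin--Lions with a diagonal argument over an exhaustion of $\Omega$ yields the strong local convergence needed to identify $\overline{u_\nu}$ as a weak Euler solution; and the boundary-flux hypothesis is transferred by passing to the limit $\nu\to0$ at \emph{fixed} $\eta$ before letting $\eta\to0$. Your closing worry about deploying the pressure estimate on domains approaching $\del\Omega$ ``at rates compatible with $\eta$'' is unfounded for exactly this reason: the iterated order of limits keeps each strip $S_\eta$ at distance $\eta/4$ from the boundary, so only fixed compact subsets are ever involved. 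The one overstated point in your (C) is the claim of \emph{strong} convergence $p_{\overline\nu}\to\overline p$ in $L^{3/2}_tC^{0,\alpha}_x$ locally: there is no time-derivative bound on $p_\nu$, so this does not follow from Aubin--Lions; at best the $w_1$-part of the decomposition in Proposition \ref{lempress} converges strongly (via the strong convergence of $\unn$ and interpolation with the uniform $C^{0,\alpha}$ bound), while the harmonic/boundary remainder is only controlled weakly in time. The paper circumvents this by working with the $x$-dependent, time-integrated flux, showing it is equicontinuous in $x$ (hypothesis (2) plus Proposition \ref{lempress}) and invoking Arzel\`a--Ascoli to obtain (\ref{Artzela0}); your $L^1(S_\eta\times(0,T))$-convergence claim should be rerouted through such a device.

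In (D) you genuinely diverge from the paper, and there are two gaps, one minor and one real. Minor: your use of weak lower semicontinuity at the fixed time $T^*$ requires $\unn(T^*)\rightharpoonup\overline{u_\nu}(T^*)$ in $L^2(\Omega)$, which needs a line of justification (equicontinuity of $t\mapsto\la u_\nu(t),\phi\ra$ for $\phi\in\mathcal D(\Omega)$, then density). Real: as written, your estimate proves (\ref{noano}) only along the extracted subsequence $\overline\nu$, whereas the theorem asserts the limit for the \emph{whole} family $\nu\to0$. The paper is structured as a proof by contradiction precisely to get the full-sequence statement: if some subsequence $u_{\nu'}$ had dissipation tending to $d>0$ up to time $T^{**}$, the entire compactness machinery (whose hypotheses (1)--(3) hold for the full family) applies to it, producing a further subsequence converging to an energy-conserving Euler solution; then the strong convergence in $L^2((0,T)\times\Omega)$ gives $\|u_{\nu'_j}(t)\|_{L^2(\Omega)}\to\|u_0\|_{L^2(\Omega)}$ for a.e.\ $t$, and the monotonicity in $t$ of the dissipation integral permits choosing $t^*\in(T^{**},T)$ outside the exceptional null set, forcing $0<d\le 0$. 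Your argument can be repaired the same way, by the standard sub-subsequence principle (every subsequence admits a further subsequence along which your inequality applies), or by adopting the paper's a.e.-time device, which also dispenses with the fixed-time weak convergence.
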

 \begin{proof}
Since $u_\nu$ are Leray-Hopf weak solutions of the Navier-Stokes equations they satisfy the energy inequality:
\begin{equation}\label{lerayhopf}
\quad \frac12\|u_\nu(t)\|^2_{L^2(\Omega)} +\nu\int_0^t\int_\Omega|\nabla_x u_\nu(t,x)|^2dxdt\le \frac12\|u_0\|^2_{L^2(\Omega)}\,, \quad  \hbox{for every} \quad t\in (0,T).
\end{equation}
From the  above  and the fact that $u_\nu$ are weak solutions of the Navier-Stokes equations one infers  the existence of a subsequence $u_{\nu_j}$ which converges weak$-*$ in $L^\infty((0,T); L^2(\Omega))$ to a function $\overline{u_\nu}\in C_{\mathrm weak}([0,T); L^2(\Omega))$. Observe that in this process the divergence free, the  impermeability condition  the initial data and the so called ``admissibility condition" are preserved for the limit function $\overline{u_\nu}$. Specifically, one has:
\begin{subequations}
\begin{equation}
\hbox{ In}  \quad  (0,T)\times  \Omega\,, \quad \nabla_x \cdot \overline{u_\nu}=0\,, \quad   \hbox{on}  \quad(t,x)\in  (0,T) \times  \del\Omega\,, \quad  \overline{u_\nu}\cdot n(x)=0;
\end{equation}
\begin{equation}
 \lim_{t\rightarrow 0+} \int_\Omega \overline{u_\nu}(t,x)\phi(x)dx=
\int_\Omega u_0(x)\phi(x)dx\,, \quad \hbox{for every} \quad \phi\in \mathcal D(\Omega) \label{wc}
\end{equation}
and
\begin{equation}
 \|\overline{u_\nu}(t)\|_{L^2(\Omega)}\le \|u_0\|_{L^2(\Omega)}, \quad \hbox{for a.e.} \quad  t\ge 0.\label{admiss}
\end{equation}
\end{subequations}
Observer that as a consequence of the fact that $\overline{u_\nu}\in C_{\mathrm weak}([0,T); L^2(\Omega))$ and of (\ref{admiss}) one infers that
\begin{equation}\label{continuity-at-zero}
\lim_{t \to 0+} \| \overline{u_\nu}(t) - u_0\|_{L^2(\Omega)} =0\,.
\end{equation}

Next, we observe that for any $\tilde \Omega \subset\subset\,\Omega$ , thanks to  (\ref{local3}), the family $u_\nu$  is bounded in $L^3((0,T);C^{0,\alpha} (\overline{\tO}))$. Therefore, the arguments presented in Proposition \ref{lempress} are valid for $p_\nu$ which together with (\ref{lerayhopf}) imply that  $\del_t u_{\nu}$ is bounded in $L^{3/2}((0,T);H^{-1}( { {\tO  }}))$. Therefore with the Aubin-Lions compactness theorem one shows that up to a subsequence, which will be also denoted by  $  u_{\nu_j}$, when restricted to the open set $(0,T)\times \tO$ converges \emph{strongly} in $L^2((0,T)\times \tO)\,.$ Applying this remark to  a family of exhausting open subsets
$$
\tO_k\subset\tO_{k+1}, \quad \hbox{such that} \quad \cup \tO_k = \Omega\,,$$
in place of $\tO$ one can
 extract  a diagonal subsesquence, also denoted by   $u_{\nu_j}$, which converges in $L^2((0,T)\times\Omega)$ to the limiting function $\overline{u_\nu}\,.$
Such strong convergence is enough to conclude  that $\overline {u_\nu}$ is a solution of the Euler equation in $(0,T)\times\Omega$.
From the above and the uniform with respect to $\nu$ hypothesis  one can conclude that $\overline {u_\nu}$ satisfies the hypotheses of Theorem \ref{localduchonrobert}. Therefore, by Theorem \ref{CETloc} if  $\overline{u_\nu}$ satisfies (\ref{forlebesgue2}) then it is a solution which conserves  energy. Thus, one needs to show that $\overline{u_\nu}$ satisfies
\begin{equation}\label{forlebesgue4}
 \lim_{\eta \rightarrow 0} \,  \int_0^T\frac 1{ \eta }\int_{ \{x\in \Omega:\, \frac{ \eta}{4} < d(x) <\frac{\eta}{2}<\frac{\eta_0}{2}\}}\left|\left(\frac {|\overline{u_\nu}|^2}2 + \overline{p_\nu}\right) \overline{u_\nu}(t,x)\cdot \vec n(\sigma( x))\right|\, dx dt =0\,.
\end{equation}
To establish this we will use assumption (\ref{forlebesgue3}). First, observe that there exists a subsequence, which will also be denoted by $u_{\nu_j}$, such that for every $\eta >0$, fixed, and for  every $x$ satisfying $\frac{\eta}{4}< d(x) <\frac{\eta}{2}$ one has
\begin{equation}
\begin{aligned}\label{Artzela0}
&\lim_{\nu_j \to 0}  \int_0^T\left|\left(\frac {|{u_{\nu_j}}|^2}2 + {p_{\nu_j}}\right) {u_{\nu_j}}(t,x)\cdot \vec n(\sigma( x))\right|\, dt
= \\
&\int_0^T\left|\left(\frac {|\overline{u_\nu}|^2}2 + \overline{p_\nu}\right) \overline{u_\nu}(t,x)\cdot \vec n(\sigma( x))\right|\, dt\,.
\end{aligned}
\end{equation}
 This is true because by virtue of assumption (\ref{local3}) and Proposition \ref{lempress} the following sequence
\[
\int_0^T\left|\left(\frac {|{u_\nu}|^2}2 + {p_\nu}\right) {u_\nu}(t,x)\cdot \vec n(\sigma( x))\right|\, dt
\]
is equicontinuous with respect to $x$, in every compact subset of $\Omega$. Hence (\ref{Artzela0}) follows by the  Arzela-Ascoli theorem after resorting to a diagonal subsequence.

Note that  (\ref{Artzela0}) gives
 \begin{equation}
\begin{aligned}\label{Artzela1}
& \lim_{{\nu_j} \to 0}  \int_0^T \frac 1{ \eta }\int_{ \{x\in \Omega:\, \frac{ \eta}{4} < d(x) <\frac{\eta}{2}<\frac{\eta_0}{2}\}}\left|\left(\frac {|{u_{\nu_j}}|^2}2 + {p_{\nu_j}}\right) {u_{\nu_j}}(t,x)\cdot \vec n(\sigma( x))\right|\, dx dt
= \\
&\int_0^T \frac 1{ \eta }\int_{ \{x\in \Omega:\, \frac{ \eta}{4} < d(x) <\frac{\eta}{2}<\frac{\eta_0}{2}\}}\left|\left(\frac {|\overline{u_\nu}|^2}2 + \overline{p_\nu}\right) \overline{u_\nu}(t,x)\cdot \vec n(\sigma( x))\right|\, dx dt\,.
\end{aligned}
\end{equation}
Thus, by taking the limit of both sides, as $\eta \to 0$, the left-hand side limit equals zero by assumption (\ref{forlebesgue3}) and this implies condition (\ref{forlebesgue4}).

Therefore, the conditions of   Theorem \ref{CETloc} are satisfied and consequently  $\|\overline{u_\nu}(t)\|_{L^2(\Omega)}$ is  constant over the interval $(0,T)$; and by virtue of (\ref{continuity-at-zero}) one has
\[
\|\overline{u_\nu}(t)\|_{L^2(\Omega)}= \|u_0\|_{L^2(\Omega)}\,,
\]
for every $t\in [0,T)$.

Finally, we prove (\ref{noano}) which we establish by contradiction. Indeed, suppose that  (\ref{noano}) is not correct, therefore,  there exist a time $T^{**}\in (0,T)$ and a
 subsequence of solutions, denoted by  $u_{\nu'}$,  such that
\begin{equation}\label{Contradiction-Assumption}
\lim_{\nu'\rightarrow 0}  \nu' \int_0^{T^{**}}\int_\Omega |\nabla_x u_{ \nu'}(t,x)|^2dxdt=d>0\,.
\end{equation}
Then by  applying the above arguments for the sequence  $u_{\nu'}$, instead of for  $u_{\nu}$, one can extract, as above, a subsequence    $u_{\nu'_j}$ which converges to a weak solution of Euler equations,  $\overline{u_{\nu'}}$, which preserves the energy over the interval $[0,T)$, i.e., $\|\overline{u_{\nu'}}(t)\|_{L^2(\Omega)}= \|u_0\|_{L^2(\Omega)}$
for every $t\in [0,T)$.   From the energy inequality for the Leray-Hopf weak solutions of the Navier-Stokes equations one has:
\begin{equation}
  \frac12 \|u_{\nu'_j}(t)\|_{L^2(\Omega)}^2 + \nu'_j \int_0^{t}\int_\Omega |\nabla_x u_{ \nu'_j}(t,x)|^2dxdt \le\frac12\|u_0\|_{L^2(\Omega)}^2\,.
\end{equation}
Since
\begin{equation}
\lim_{\nu'_j\rightarrow 0}\frac12 \|u_{\nu'_j}(t)\|_{L^2(\Omega)}^2=\frac12\|\overline{u_{\nu'}}(t)\|_{{L^2(\Omega)}}^2\,,
\end{equation}
for  every $t\in (0,T)\setminus E$, with $|E|=0$; and since $\|\overline{u_{\nu'}}(t)\|_{L^2(\Omega)}= \|u_0\|_{L^2(\Omega)}$, for every $t\in [0,T)$, one concludes that
$$
\lim_{{\nu'_j}\rightarrow 0}  {\nu'_j} \int_0^t\int_\Omega |\nabla_x u_{ {\nu'_j}}(t,x)|^2dxdt=0\,,
$$
for  every $t\in (0,T)\setminus E$. Choose $t^* \in (T^{**}, T)\setminus E$, from the above and (\ref{Contradiction-Assumption}) one has
$$
0 < d = \lim_{{\nu'_j}\rightarrow 0}  {\nu'_j} \int_0^{T^{**}}\int_\Omega |\nabla_x u_{ {\nu'_j}}(t,x)|^2dxdt \le \lim_{{\nu'_j}\rightarrow 0}  {\nu'_j} \int_0^{t^{*}}\int_\Omega |\nabla_x u_{ {\nu'_j}}(t,x)|^2dxdt=0\,,
$$
which leads to a contradiction.

 \end{proof}
{\section{Conclusion}}

In this section we make some comments clarifying the rational behind some of the assumptions that have been made in this contribution.

1.  {\it The hypothesis concerning  the pressure for energy conservation by the Euler flow:} In the absence of physical boundaries the estimates on the pressure follow  directly from the equation
\begin{equation}
-\Delta p= \sum_{ij} \del_{x_i}\left( u_j \del_{x_j} u_i\right) =\sum_{ij} \del_{x_i}\del_{x_j}\left (u_i u_j\right).
\end{equation}
This is no more the case in the presence of physical boundaries and some (very weak) hypotheses seem to be both natural and compulsory (cf. \cite{BT2}). In \cite{BT2} this matter has been  the subject of Proposition 1.2, and in the present contribution it is discussed in Proposition (\ref{lempress}).

2. {\it  The vanishing viscosity limit:} Observe that  hypotheses (\ref{press0}) (a very mild assumption of the behavior of the pressure near the boundary)  and (\ref{local1}) (local estimate on the velocity, away from the boundary)  do not seem to be enough in order to obtain conservation of energy in the zero viscosity limit. However, hypothesis (\ref{forlebesgue3}):
$$
\lim_{\eta \rightarrow 0} \,  \lim_{\nu \to 0}\int_0^T \frac 1{ \eta }\int_{ \{x\in \Omega:\, \frac{ \eta}{4} < d(x) <\frac{\eta}{2}<\frac{\eta_0}{2}\}}\left|\left(\frac {|u_\nu|^2}2 + p_\nu\right) u_\nu(t,x)\cdot \vec n(\sigma( x))\right|\, dx dt =0\,.
$$
seems to be compulsory.
This has  also been confirmed by the analysis made in \cite{CV}, where instead of such a hypothesis on the work done by the Bernoulli pressure at the boundary, some uniform (with respect to the viscosity, as $\nu\rightarrow 0$) regularity of the Navier-Stokes flow,  $u_\nu$, is assumed (cf. for instance formula (3.2) in \cite{CV}). This allows the authors to obtain the convergence to an {\it admissible weak solution} of Euler equations. On the other hand, it is not shown that such solution may conserve the total energy or may coincide with a Lipschitz solution with the same initial data and as shown in \cite{BSW} with the constructions of examples of admissible weak solutions which do not preserve the total energy.

Condition (\ref{forlebesgue3}) is naturally implied by a more explicit, but stronger, hypothesis  that we state below  for the sake of clarification.
There exist an  open set
$$V_\gamma=\{x\in \Omega \quad \hbox{with} \quad d(x,\del\Omega) <\gamma\}\,,$$
 a constant $M$ and a $\nu-$uniform modulus of continuity
 $$ s>0\mapsto \omega(s) >0\quad  \hbox{with}
\quad \lim_{s\rightarrow 0} \omega(s)=0\,,
 $$
such that one has
 \begin{equation}
 \begin{aligned}
&\forall  (t,x) \in (0,T)\times V_\gamma:  \quad |u_\nu(t,x)|+|p_\nu (t,x)| \le M\\
&|u_\nu(t,x)\cdot \vec n(\sigma (x))|\le \omega(d(x))\, , \quad \hbox{ for every }\quad x\in \Omega\cap V_\gamma\,.  \label{forboundary2}
\end{aligned}
\end{equation}
 Observe, however, that  condition (\ref{forboundary2}) and a fortiori  condition (\ref{forlebesgue3}) are fully compatible with the existence of a scenario,  in term of the Prandtl equations of the boundary layer, when such a scenario turns out to be valid.

\section*{Acknowledgements}
The authors would like to thank the anonymous referee for the constructive remarks that have contributed to the improvement of this revised version of this work. C.B.\ and E.S.T.\ would like to thank the ``South China Research Center for Applied Mathematics and Interdisciplinary Studies (CAMIS)", South China Normal University, for its warm hospitality where this work was completed.
The work of E.S.T.\ was supported in part by the ONR grant N00014-15-1-2333, and by the Einstein Stiftung/Foundation - Berlin, through the Einstein Visiting Fellow Program, and by the John Simon Guggenheim Memorial Foundation.

\end{document}